\newtheorem{theorem}{Theorem}
\theoremstyle{plain}
\newtheorem{corollary}{Corollary}
\newtheorem{proposition}{Proposition}
\numberwithin{equation}{section}
\numberwithin{example}{section}
\title[Sasakian lift \ldots and $\alpha$-Sasakian Ricci solitons ]{Sasakian lift of K\"ahler manifold and 
    $\alpha$-Sasakian Ricci solitons}
\author{Piotr Dacko}
\begin{document}
\begin{abstract}
In this paper we provide a  local construction of a Sasakian manifold given  a  K\"ahler manifold. 
Obatined in this way manifold we call Sasakian lift of K\"ahler base. 
Almost contact metric structure is determined by the operation of the lift 
of vector fields - idea similar to lifts in Ehresmann connections.  
We show that Sasakian lift inherits geometry very close to its  K\"ahler 
base. In some sense geometry of the lift is in analogy with geometry of hypersurface
in K\"ahler manifold. There are obtained structure equations between corresponding Levi-Civita 
connections, curvatures and Ricci tensors of the lift and its base. We study lifts 
of symmetries different kind: of complex structure, of K\"hler metric, and K\"ahler structure automorphisms. 
In connection with $\eta$-Ricci solitons we introduce more general class of manifolds 
called twisted $\eta$-Ricci solitons. As we show class of $\alpha$-Sasakian twisted $\eta$-Ricci solitons 
is invariant under naturally defined group of structure deformations.  As corollary it is proved 
  that orbit of Sasakian lift of steady or shrinking Ricci-K\"ahler  soliton  contains 
 $\alpha$-Sasakian Ricci soliton. In case of expanding Ricci-K\"ahler soliton existence of 
 $\alpha$-Sasakina Ricci solition is assured provided expansion coefficient is small enough.
\end{abstract}
\maketitle

\section{Introduction}
The relations between Sasakian and K\"ahler manifolds is now quite well understood.
In case structure is regular, characteristic vector field or Reeb vector field is 
regular, manifold can be viewed as line or circle bundle over K\"ahler manifold, 
where K\"ahler structure is determined by Sasakian structure.  In the paper 
we consider in some sense reverse construction which allow to create Sasakian manifold 
given K\"ahler manifold. Construction is natural. Obtained in this way Sasakian 
manifolds share many properties of K\"ahler manifolds. 
Construction is of Sasakian lift is purely local - 
so strictly speaking we should rather consider our construction in terms of 
germs of structures.  

There is analogy between our construction and idea of Ehresmann connection on 
fiber bundle $\pi:\mathcal P \rightarrow \mathcal B$, $\dim \mathcal B=n$, 
$\dim \mathcal P=n+k$. Ehresmann connection is some $n$-dimensional distribution 
$\mathcal D$ on total space and there is 1-1 operation  operation between vector fields on
base manifold and sections of $\mathcal D$. 

In terms of structure equation here is analogy with theory of hypersurfaces 
in Riemann manifolds. Say $\iota: (\mathcal M,\bar g) \rightarrow (\mathcal{\bar M}, g)$, 
$\iota$ is inclusion, $\bar g=\iota^* g$. The first structure equation relates connection of manifold 
and connection of hypersurface $\nabla_XY = \bar\nabla_XY+h(X,Y)\xi$, $h$ 
being second fundamental form, $\xi$ normal vector field. Now the first structure equation for the lift 
$\pi: (\mathcal M , g) \rightarrow  (\mathcal B, \bar g) $,  $\mathcal M= \mathcal B^L$ is Sasakian lift, $\mathcal B$ 
its K\"ahler base, reads
\begin{equation}
\bar\nabla_{X^L}Y^L  = (\nabla_XY)^L-\Phi(X^L,Y^L)\xi,
\end{equation}
$\xi$ being Reeb vector field. So covariant derivative of Sasakian lift  is determined by the lift of covariant 
derivative of its K\"ahler base.  The other possible point of view of our construction 
is theory of Riemann submersions with 1-dimensional fibers.

Geometry of Sasakian lift is very close to geometry of its base. For example for the Ricci tensor 
$\bar Ric$, of the lift, tensor field $\rho = Ric(X,\phi Y)$ is totally skew-symmetric, ie. some 
2-form, moreover its related to the K\"ahler-Ricci form $\rho$, and K\"ahler form $\omega$ 
\begin{equation}  
 \bar\rho =\pi^*\rho - 2\pi^*\omega,
 \end{equation}
in particular this Sasakian-Ricci form is closed.
  
In geometric terms we study relations between infinitesimal symmetries of complex 
structure, Killing vector fields on K\"ahler manifold and some class of infinitesimal 
symmetries of Sasakian lift, as we call our construction. In particular there is local 
map between inifinitesimal  symmetries of K\"ahler structure and  almost contact 
structure of Sasakian manifold.  

The other results of this kind are relations between holomorphic space forms 
and Sasakian manifolds of constant $\phi$-sectional curvatures, also K\"ahler Einstein 
manifolds and Sasakian $\eta$-Einstein manifolds. Lift of K\"ahler Einstein base 
is Sasakian $\eta$-Einstein manifold. 

We provide relations between curvatures and Ricci tensors  of K\"ahler manifold and its Sasakian lift.
Obtained results allow us to  show that Sasakian lift of holomorphic space form 
is Sasakian manifold of constant $\phi$-sectional curvature, also that 
lift of K\"ahler Einstein manifold is $\eta$-Einstein Sasakian manifold.

One of important subject of this paper is to study properties of Sasakian lift 
of K\"ahler-Ricci soliton. Our main result here is that Sasakian lift satisfies 
what we call  equation of twisted $\eta$-Ricci soliton 
\begin{align*}
 & Ric +\frac{1}{2}\mathcal L_X g = \lambda g +2C_1 \alpha_X\odot\eta +C_2 \eta\otimes\eta, \\
& \alpha_X = \mathcal L_X\eta,
\end{align*}
where $\lambda$, $C_1$, $C_2$ are some constants and $\odot$ denotes symmmetric tensor 
product. 
 In this paper almost contact metric manifold is 
called $\eta$-Ricci soliton if there is vector field $X$ and 
\begin{equation}
Ric +\frac{1}{2}\mathcal L_X g = \lambda g + \mu \eta\otimes \eta,
\end{equation}
thus our definition is more general than that provided in \cite{ChoKimura}, 
where strictly speaking  $\eta$-Ricci soliton is a metric which 
satisfies above equation, where $X=\xi$.  In case of $\alpha$-Sasakian manifold 
Reeb vector field is Killing, therefore manifold is $\eta$-Ricci soliton only if 
it is $\eta$-Einstein manifold. 

Condition for soliton vector field $X=\xi$ is rather restrictive. For 
example for very wide class of manifolds which satisfy $\mathcal L_\xi\eta=0$, 
and   
$d\Phi = 2f\eta\wedge \Phi$, for some local function $f$, the shape of Ricci tensor 
of strict $\eta$-Ricci soliton is completely determined. Namely Ricci tensor 
is necessary of the form, $h=\frac{1}{2}\mathcal L_\xi \phi$,  
\begin{equation}  
Ric(X,Y) = \alpha g(X,Y)+\beta g(X, h\phi Y)+ \gamma \eta(X)\eta(Y),
\end{equation}
where $ \alpha$, $\beta$, $\gamma$ are some functions. In case $h=0$, manifold is $\eta$-Einstein.

For sake of completness paper contains short exposition of geometry of 
class of deformations of Sasakian manifold. These deformations extend 
well-known $\mathcal D$-homotheties. Main result here is that kind 
of deformation, we call them $\mathcal D_{\alpha,\beta}$-homotheties
\begin{equation}
g |_{\mathcal D}\mapsto g' |_{\mathcal D} = \alpha g|_{\mathcal D}, \quad
g |_{\{\xi\}} \mapsto g'|_{\{\xi\}} = \beta^2 g|_{\{\xi\}},
\end{equation}
$\alpha$, $\beta = const. > 0$,  map $\alpha$-Sasakian 
manifold into another $\alpha'$-Sasakian manifold. 

As we will see action of $\mathcal D_{\alpha,\beta}$-homotheties determines three invariant classes 
of twisted $\alpha$-Sasakian $\eta$-Ricci solitons where $C_1 < \frac{1}{2}$, 
$C_1 = \frac{1}{2}$ or $C_1> \frac{1}{2}$.  
Lift of K\"ahler-Ricci soliton provides example of Sasakian twisted $\eta$-Ricci soliton 
with $C_1 < \frac{1}{2}$.  This raises existence question: do exist $\alpha$-Sasakian 
twisted $\eta$-Ricci solitons with $C_1 \geqslant \frac{1}{2}$?  In some cases we have stronger  result: 
there is $\mathcal D_{\alpha,\beta}$-homothety such that  image is $\alpha$-Sasakian 
Ricci soliton.  As we will see this is the case of lift over K\"ahler-Ricci steady or shrinking solitons. 
The lift of expanding K\"ahler Ricci soliton can always be deformed into $\alpha$-Sasakian 
$\eta$-Ricci soliton and into Ricci soliton if expansion coefficient is small enough. 

Used notation can be confused for the reader. Particularly this concerns the how we use the notion of 
$\alpha$-Sasakian manifold. In some parts of paper this term is used in wider sense: 
manifold is called $\alpha$-Sasakian if there is real constant $c > 0$, 
and covariant derivative $\nabla\phi$ satisfies $(\nabla_X\phi)Y=c(g(X,Y)\xi-\eta(Y)X)$. 
However in expressions like $\frac{\beta}{\alpha}$-Sasakian, it is assumed 
that $c=\frac{\beta}{\alpha}$.

\section{Preliminaries}
In this section we will recall some basic facts about almost contact metric manifolds,
and in particular about Sasakian manifolds.
\subsection{Almost contact metric manifolds}

Let $\mathcal{M}$ be a smooth connected odd-dimensional manifold, 
$\dim \mathcal{M} =2n+1 \geqslant 3$. An almost contact metric structure 
on $\mathcal{M}$, is a quadruple of tensor fields $(\phi,\xi,\eta)$, where 
$\phi$ is $(1,1)$-tensor field, $\xi$ a vector field, $\eta$ a 1-form, and 
$g$ a Riemnnian metric, which satisfy \cite{Blair}
\begin{align}
& \phi^2X = -X + \eta(X)\xi, \quad \eta(\xi) = 1, \\
& g(\phi X,\phi Y) = g(X,Y)-\eta(X)\eta(Y),
\end{align}
where $X$, $Y$ are arbitrary vector fields on $\mathcal{M}$. Triple 
$(\phi,\xi,\eta)$ is called an almost contact structure (on $\mathcal{M}$). 
From definition it follows that tensor field $\Phi(X,Y)=g(X,\phi Y)$, is 
totally skew-symmetric, a 2-form on $\mathcal{M}$, called a fundamental form.
In the literature vector field $\xi$ is referred to as characteristic vector field,
or Reeb vector field. In analogy the form $\eta$ is called characteristic form. 
Distribution $\mathcal{D} = \{\eta =0\}$, is called characteristic distribution,
or simply kernel distribution, as its sections 
$X\in \Gamma^\infty(\mathcal{D})$, satisfy $\eta(X)=0$. As $\eta$ is non-zero everywhere 
$\dim \mathcal{D} = 2n$.  Manifold equipped with fixed almost contact metric 
structure is called almost contact metric manifold.  

Let for $(1,1)$-tensor field $S$, $N_S$ denote its Nijenhuis torsion, thus
\begin{equation}
N_S(X,Y)=S^2[X,Y]+[SX,SY]-S([SX,Y]+[X,SY]).
\end{equation}
Almost contact metric structure $(\phi,\xi,\eta,g)$ is said to be normal 
if tensor field $N^{(1)} = N_\phi +2d\eta\otimes \xi$, vanishes everywhere. 
Normality is the condition of integrability of naturally defined complex structure 
$J$ on a product of real line (a circle) and almost contact metric manifold.

There are three classes most widely studied almost contact metric manifolds
\begin{itemize}
\item[a)] Contact metric manifolds defined by condition 
\begin{equation}
     d\eta = \Phi;
\end{equation}
\item[b)] Almost Kenmotsu manifolds
\begin{equation}
d\eta = 0,\quad d\Phi = 2\eta\wedge\Phi;
\end{equation}
\item[c)] Almost cosymplectic (or almost coK\"ahler) manifolds
\begin{equation}
d\eta =0, \quad d\Phi =0. 
\end{equation}
\end{itemize}
If additionaly manifold is normal we obtain following corresponding classes
\begin{itemize}
\item[an)] Sasakian manifolds - ie. contact metric and normal
\begin{equation}
(\nabla_X\phi)Y = g(X,Y)\xi - \eta(Y)X;
\end{equation}
\item[bn)] Kenmotsu manifolds 
\begin{equation}
(\nabla_X\phi)Y = g(\phi X,Y)\xi - \eta(Y)\phi X;
\end{equation}
\item[cn)] Cosymplectic manifolds
\begin{equation}
\nabla\phi =0.
\end{equation}
\end{itemize}
Above we have provided characterization of respective manifold in terms of 
covariant derivative - with resp. to the Levi-Civita connection of $g$ - 
of the structure tensor $\nabla\phi$.

For an almost contact metric manifold $\mathcal{D}$-homothety, 
with coefficient $\alpha$, is a 
deformation of an almost contact metric structure 
$(\phi,\xi,\eta,g) \rightarrow (\phi',\xi',\eta',g')$, defined by 
\begin{align*}
& \phi'=\phi,\quad \xi'=\frac{1}{\alpha}\xi,\quad 
\eta' = \alpha\eta, \\
& g' = \alpha g +(\alpha^2-\alpha)\eta\otimes\eta.
\end{align*} 
In this paper we will consider more general deformations of an 
almost contact metric structure, defined by real parameters 
$\alpha$, $\beta > 0$, given by 
\begin{align*}
& \phi' = \phi, \quad \xi'=\frac{1}{\beta}\xi, \quad
\eta' = \beta\eta, \\
& g'= \alpha g + (\beta^2-\alpha)\eta\otimes\eta.
\end{align*}
We call such deformations $\mathcal D_{\alpha,\beta}$-homotheties. 

\subsection{Sasakian manifolds}
Here we provide some very basic properties of Sasakian 
manifold. On Sasakian manifold 
\begin{equation}
(\nabla_X\phi)Y = g(X,Y)\xi - \eta(Y)X,
\end{equation}
which implies that  Reeb vector field $\xi$,
is  Killing vector field $\mathcal{L}_\xi g=0$, moreover 
\begin{equation} 
\mathcal{L}_\xi\phi =0, 
\quad \nabla_X\xi = -\phi X,\quad \nabla_\xi\xi=0,
\end{equation}
for curvature $R_{XY}\xi$, and Ricci tensor $Ric(X,\xi)$,
we have
\begin{align}
\label{l:curv2}
& R_{XY}\xi = \eta(Y)X-\eta(X)Y, \quad
R_{X\xi}\xi = X-\eta(X)\xi, \\
& \label{l:ric:xi}
Ric(X,\xi) = 2n\eta(X),
\end{align}
in particular $Ric(\xi,\xi)=2n$, and $R_{XY}\xi =0$, 
for $X$, $Y$  sections of characteristic distribution,
$X$, $Y\in \Gamma^\infty(\mathcal{D})$, see \cite{Blair}.

More general almost contact metric manifold is called $\alpha$-Sasakian  
\begin{equation}
(\nabla_X\phi)Y = \alpha(g(X,Y)\xi -\eta(Y)X),
\end{equation}
for some non-zero real constant $\alpha \neq 0$.

As we will proceed further we will obtain following equation 
for Ricci tensor $Ric$ of $\alpha$-Sasakian manifold
\begin{equation}
\label{e:e:sol}
Ric +\frac{1}{2}(\mathcal{L}_Xg) = \lambda g +2C_1\alpha_X\odot\eta +
C_2\eta\otimes \eta,
\end{equation}
where $\lambda$, $C_1$, $C_2$ are some real constants, vector field
$X$, and 1-form $\alpha_X$, satisfy
\begin{equation}
\label{e:e:sol2}
\eta(X) = 0, \quad (\mathcal{L}_X\eta)(Y)=\alpha_X(Y),
\end{equation}
in particular on $\alpha$-Sasakian manifold $\alpha_X(\xi)=0$, as
by assumption $\eta(X)=0$, hence
$(\mathcal{L}_X\eta)(\xi)=2d\eta(X,\xi)=0$.

\subsection{K\"ahler manifolds}
Almost complex structure on manifold is a $(1,1)$-tensor
 field $J$, such that $J^2X = -Id$. Structure is said 
 to be complex if any point admits a local chart, 
 such that local coefficient of $J$, in this chart, 
 are all constants. Necessary and sufficient condition,
 is vanishing Nijenhuis torsion of $J$. If additionally 
 there is Riemannian metric $g$, with properties 
 $g(JX,JY)= g(X,Y)$, and complex structure is covariant 
 constant for Levi-Civita connection - manifold is called a K\"ahler manifold. Tensor field 
 $\omega(X,Y)=g(X,JY)$, is a maximal rank 2-form, called K\"ahler 
 form, as $J$ is parallel, K\"ahler form is always 
 closed. In particular K\"ahler form determines symplectic structure on K\"ahler manifold.  
 
 In dimensions $ > 2$,  K\"ahler manifold of constant 
sectional curvature is always locally flat, thus more 
natural notion is holomorphic sectional curvature. 
This is sectional curvature of "complex" plane 
ie. a plane spanned by vectors $X$, and $JX$. If 
holomorphic curvature does not depend neither on 
point nor on complex plane section - K\"ahler 
manifold is said to have constant holomorphic 
curvature. 

Infinitesimal automorphism of complex structure is 
a vector field $X$, which generates local 1-parameter 
flow $f_t$, of biholomorphisms, that is 
$f_{t*}J = Jf_{t*}$. The vector field $X$ is an 
infinitesimal automorphism if and only if 
$\mathcal{L}_XJ =0$. Note that if $X$ determines 
infinitesimal automorphism, then also vector 
field $JX$ is an infinitesimal automorphism.
Moreover complex vector field 
$X^\mathbb{C}=X-\sqrt{-1}JX$, 
is holomorphic. Its local coordinates in complex chart are holomorphic functions. 

We say that vector field $X$ is K\"ahler structure automorphism if 
the two of the three conditions 
\begin{equation}
\mathcal{L}_XJ = 0,\quad \mathcal{L}_Xg =0, \quad \mathcal{L}_X\omega =0,
\end{equation}
are satisfied. Then the third condition is automatically satisfied. For 
example if $X$ is complex structure automorphism and Killing vector field, then
it preserves K\"aler form. In particular is locally Hamiltonian with respect 
to symplectic structure determined by K\"ahler form, that is locally 
\begin{equation}
   \omega(X,Y)= dH(Y),
\end{equation}
for some locally defined function $H$. cf. \cite{Bess},\cite{KoNo2}.

\subsection{Ricci solitons}
Given geometric objects on manifold is it important to know 
do exist objects with some particular properties. One of 
possible way to find such particular entity is trough geometric 
flow. Of course here the point is how to create proper law of evolution. 
This kind of object is so-called Ricci flow introduced by Hamilton 
\cite{Hamilton}
\begin{equation}
\frac{\partial}{\partial t}g=  -2Ric(g_t), \quad t\in [0,T), \quad T > 0, 
\end{equation}
where we search for solution on some non-empty interval, with given 
initial condition $g_0 = g$.  

In present time Ricci flow is one of most extensively studied subject. The 
goals are two-fold: analytical and geometrical. In terms of analysis 
there are studies considering problems of existence of the Ricci, on 
side of geometry plenty new manifolds which admits non-trivially Ricci 
flows \cite{ChowKnopf},\cite{ChowLuNi}. 

Particular case of solutions are flows of the form 
\begin{equation}
g_t = c(t)f_t^*g, 
\end{equation}
where $f_t$  $1$-parameter group of diffeomorphisms. Such solutions are called Ricci solitons. In some sense they represent 
trivial solutions of Ricci flow, say $g_0$, and $g_t$,  
are always  isometric up to homothety. In infinitesimal terms 
Ricci soliton is Riemannian manifold (Riemannian metric),
 which admits a vector field 
$X$, such that 
\begin{equation}
Ric +\frac{1}{2}\mathcal{L}_Xg = \lambda g, 
\end{equation}
for some real constant $\lambda \in \mathbb{R}$, \cite{Cao1},\cite{Cao2},\cite{Ivey},\cite{SongWeink}. 
 Depending on sign of $\lambda$ there are expanding Ricci solitons: $\lambda >0$,  
 steady: $\lambda =0$, and shrinking: $\lambda < 0$.  Of course in this classification only sign  
 of $\lambda$ counts as homothety $g \mapsto g' = cg$, $c > 0$, provides Ricci soliton with 
 soliton constant $\lambda' = \lambda c$ (and soliton vector field $X' = cX$). 
 In particular we can always normalize equation to $\lambda =1, 0, -1$.
 Assuming $X$ is gradient 
$g(X,Y)=dH(Y)$, we have $(\mathcal{L}_Xg)(Y,Z) = 2Hess_H(Y,Z)$, where 
as usually $Hess_H$, stands for Hessian of the function, which is defined by 
\begin{equation}
Hess_H(Y,Z)=(\nabla_Y dH)(Z),
\end{equation} 
ie. it is covariant derivative of differential form of the function $H$. 
Therefore in case $X=grad H$, we obtain
\begin{equation}
Ric + Hess_H = \lambda g.
\end{equation}

The solution to the Ricci flow equation in case of initial metric being K\"aheler, is  family 
of K\"ahler metrics.  Therefore in particular case of  Ricci soliton we obtain that the 
vector field $X$, satisfies $\mathcal L_X J=0$.   
Equivalently metric of K\"ahler manifold is K\"ahler-Ricci soliton if 
\begin{equation}
\rho +\frac{1}{2}\mathcal{L}_X\omega = \lambda\omega, \quad \mathcal L_X J=0,
\end{equation}
where $\rho$ denotes the Ricci form \cite{Bryant}.

\section{Sasakian lift of K\"ahler manifold}
Let $\mathcal{N}$ be K\"ahler manifold with K\"ahler structure $(J,g)$, let 
$\omega$ be a K\"ahler form $\omega(X,Y) = g(X,JY)$. Assume there is globally 
defined 1-form $\tau$, such that $\omega = d\tau$, if there is no such form, 
so real cohomology class $[\omega] \neq 0$, we restrict structure to some 
open subset $\subset \mathcal{N}$, to assure existence of $\tau$. Then on 
product $\mathcal{M} = \mathbb{R}\times\mathcal{N}$, we introduce structure 
of Sasakian manifold in terms of a lift of vector field on $\mathcal{N}$. 
Set 
$$ 
\xi = \partial_t,\quad \eta = dt + \pi_2^*\tau,
$$ 
 for vector 
field $X$ on K\"ahler base we define its lift  $X \mapsto X^L$
$$X^L= -\pi_2^*\tau(X)\xi+X,
$$ where 
$\pi_1$, $\pi_2$ are projections 
$$
\pi_1: \mathcal{M} \rightarrow \mathbb{R},\quad 
 \pi_2: \mathcal{M}\rightarrow \mathcal{N},
 $$ 
 on the first and second 
 product components. If $f:\mathcal{N} \rightarrow \mathbb{R}$, 
 is a smooth function on K\"ahler base, 
we set $\bar{f}:\mathcal{M}\rightarrow \mathbb{R}$, $\bar{f}=f\circ\pi_2$, 
then $(fX)^L = \bar{f}X^L$. Note for $\bar{f}$, we have $\xi\bar{f}=
d\bar{f}(\xi)=df(\pi_{2*}\xi)=0$.  For functions $f_i$, and vector fields $X_i$,  $i=1,2$, on $\mathcal{N}$ we have
\begin{equation}
(f_1 X_1+f_2X_2)^L = \bar{f}_1X_1^L+\bar{f}_2X_2^L,
\end{equation}
in particular $(\cdot)^L$ is $\mathbb{R}$-linear.
  With help of the operation of the lift  we now introduce on $\mathcal{M}$ 
 a tensor field $\phi$, and metric $g^L$ , requiring 
\begin{equation}
\phi X^L = (JX)^L, \quad \phi\xi =0, \quad g^L = \eta\otimes\eta + \pi_2^*g.
\end{equation}
   Note following  formula for 
 commutator of lifts
 \begin{equation}
 \label{e:bra:lif}
  [X^L,Y^L]  = [X,Y]^L-2d\eta(X^L,Y^L)\xi.
 \end{equation}
 
 The following proposition 
 is just simple verification with help of provided definitions.
 \begin{proposition}
 Tensor fields $(\phi^L,\xi,\eta, g^L)$, determine almost contact 
 metric structure on $\mathcal{M}$. 
 \end{proposition}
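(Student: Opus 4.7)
The plan is to verify the two defining identities of an almost contact metric structure by direct computation on the two types of building blocks: the Reeb field $\xi$ and horizontal lifts $X^L$. Before anything, though, I would pin down that $\phi$ is genuinely well-defined as a global $(1,1)$-tensor field.

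First I would check that at every point $p=(t,q)\in\mathcal{M}$ the tangent space $T_p\mathcal{M}$ decomposes uniquely as $\mathbb{R}\,\xi_p\oplus\{X^L_p:X\in T_q\mathcal{N}\}$. This follows because $\pi_{2*}\xi=0$ while $\pi_{2*}X^L=X$ (the summand $-\pi_2^*\tau(X)\xi$ in $X^L$ is killed by $\pi_{2*}$), so $X\mapsto X^L_p$ is injective into $T_p\mathcal{M}$ and together with $\xi_p$ exhausts the dimension. Combined with the identity $(fX)^L=\bar f\,X^L$ stated in the preliminaries, this makes the prescription $\phi\xi=0$, $\phi X^L=(JX)^L$ extend uniquely to a $C^\infty(\mathcal{M})$-linear endomorphism of $\Gamma^\infty(T\mathcal{M})$.

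Then I would verify the pointwise identities on the two kinds of vectors. For $\eta$: $\eta(\xi)=dt(\partial_t)+\pi_2^*\tau(\partial_t)=1$, and $\eta(X^L)=-\pi_2^*\tau(X)\,\eta(\xi)+\eta(X)=-\pi_2^*\tau(X)+\pi_2^*\tau(X)=0$, using that $dt$ kills the natural lift of any field from $\mathcal{N}$. For $\phi^2$: $\phi^2\xi=0=-\xi+\eta(\xi)\xi$, and $\phi^2 X^L=\phi(JX)^L=(J^2X)^L=-X^L$ by $\mathbb{R}$-linearity of the lift, which matches $-X^L+\eta(X^L)\xi$ since $\eta(X^L)=0$.

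Finally for the metric compatibility $g^L(\phi Z,\phi W)=g^L(Z,W)-\eta(Z)\eta(W)$, I would first tabulate the three basic pairings from $g^L=\eta\otimes\eta+\pi_2^*g$ and the formulas $\pi_{2*}\xi=0$, $\pi_{2*}X^L=X$: one gets $g^L(\xi,\xi)=1$, $g^L(\xi,X^L)=0$, and $g^L(X^L,Y^L)=\pi_2^*g(X,Y)$. The identity is then trivial on $(\xi,\xi)$ and on $(\xi,X^L)$, while on $(X^L,Y^L)$ it reduces to $\pi_2^*g(JX,JY)=\pi_2^*g(X,Y)$, which is exactly the Kähler compatibility of $g$ on $\mathcal{N}$. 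The only step with any subtlety is the well-definedness of $\phi$ in the first paragraph; once that is in place, the rest is bookkeeping, and no obstacle arises.
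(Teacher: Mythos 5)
Your proposal is correct and follows essentially the same route as the paper: reduce to the two types of generators ($\xi$ and lifts $X^L$), compute $\phi^2X^L=(J^2X)^L=-X^L$, and reduce the metric compatibility to $g(JX,JY)=g(X,Y)$ on the K\"ahler base. Your extra care about the well-definedness of $\phi$ and the explicit checks $\eta(\xi)=1$, $\eta(X^L)=0$ only make explicit what the paper's frame argument leaves implicit.
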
 
 \begin{proof}
 If $(X_1,X_2,\ldots, X_{2n})$ is a local frame of vector fields on K\"ahler base vector 
 fields $(X_1^L,X_2^L,\ldots,X^L_{2n})$ is a local frame which spans characteristic 
 distribution $\mathcal D =  \ker \eta = \{ \eta = 0 \}$, with Reeb vector field they create local 
 frame on the lift $\mathcal M$. Therefore any vector $\bar Y$ on the lift field can be given locally by
 \begin{equation}
\bar Y =a^0\xi + \sum_{i=1}^{2n}a^i X_i^L, 
\end{equation}
$a^i$, $i=0,\ldots, 2n$ are some functions, 
therefore it is enough to verify that $\phi^2 X^L = -X^L$. By definition 
\begin{equation}
\phi^2 X^L = \phi (JX)^L = (J^2 X)^L = -X^L.
\end{equation}
Similarly in case of metric given vector fields $\bar Y$, $ \bar Z$, 
\begin{equation}
g^L(\phi\bar Y,\phi \bar Z) = \sum_{i,j=1}^{2n} a^i b^j g^L(\phi X_i^L,\phi X_j^L),
\end{equation}
from definition we have 
\begin{equation}
g^L(\phi X_i^L,\phi X_j^L)  = g(JX_i,JX_j)\circ \pi_2,
\end{equation}
as base manifold is K\"ahler $g(JX_i,JX_j) = g(X_i,X_j)$, from other hand 
\begin{equation}
g^L(X_i^L, X_j^L ) = g(X_i,X_j)\circ\pi_2,
\end{equation}
hence $g^L(\phi  \bar Y, \phi \bar Z) = g^L(\bar Y, \bar Z)-\eta(\bar Y)\eta(\bar Z)$.
 \end{proof}
  
 The proof o the above Proposition says little more.
 \begin{corollary}
 For $(t,q)\in \mathbb{R}\times \mathcal{N}$,  map $X \mapsto X^L$, establishes 
 isometry $(T_q\mathcal{N}, g_q)\rightarrow (\mathcal{D}_{(t,q)}, g^L|_\mathcal{D})$. 
 In particular if $E_i$, $i=1,\ldots 2n$, is an orthonormal local frame 
 on K\"ahler base, lifts $E_i^L$, $i=1,\ldots 2n$, form an orthonormal frame 
 spanning contact distribution. 
 \end{corollary}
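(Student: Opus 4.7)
The plan is to observe that the statement decomposes into three pieces: first, that $X^L$ actually lies in the characteristic distribution $\mathcal{D}$; second, that the lift is $\mathbb{R}$-linear and injective, hence bijective onto $\mathcal{D}_{(t,q)}$ by a dimension count; and third, that it preserves the metric. Once these three are in place, the orthonormal-frame statement is immediate.

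For the first point, I would unfold $\eta(X^L)$ directly from the definitions $\eta = dt + \pi_2^*\tau$ and $X^L = -\pi_2^*\tau(X)\xi + X$, using $dt(\xi)=1$, $dt(X)=0$ for the horizontal lift, and $\pi_2^*\tau(\xi)=0$. This yields $\eta(X^L)=-\pi_2^*\tau(X)+\pi_2^*\tau(X)=0$, so $X^L$ is a section of $\mathcal{D}$. For linearity I would cite the identity $(f_1X_1+f_2X_2)^L=\bar f_1 X_1^L+\bar f_2 X_2^L$ already recorded in the excerpt, restricted pointwise to constants. Injectivity at a point follows from $\pi_{2*}X^L = X$, so the kernel of the lift is trivial; since $\dim T_q\mathcal{N}=2n=\dim\mathcal{D}_{(t,q)}$ (as $\dim\mathcal{M}=2n+1$ and $\eta$ is a nonzero covector), the map is a linear isomorphism onto $\mathcal{D}_{(t,q)}$.

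The metric-preserving property is essentially already extracted in the proof of the preceding proposition: since $g^L = \eta\otimes\eta + \pi_2^*g$ and both $\eta(X^L)$ and $\eta(Y^L)$ vanish, only the $\pi_2^*g$ term contributes, giving
\begin{equation*}
g^L(X^L,Y^L) = (\pi_2^*g)(X^L,Y^L) = g(\pi_{2*}X^L,\pi_{2*}Y^L)\circ\pi_2 = g(X,Y)\circ\pi_2.
\end{equation*}
Evaluating at $(t,q)$ then gives exactly $g_q(X_q,Y_q)$, which is the desired isometry property.

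There is no genuine obstacle here; the only minor care needed is to keep track of the two roles that the symbol $X$ plays — a vector field on $\mathcal{N}$ and its canonical horizontal embedding into $T\mathcal{M}$ — and to ensure that the $\eta\otimes\eta$ component of $g^L$ indeed drops out when evaluated on elements of $\mathcal{D}$. The orthonormal-frame claim is then immediate: if $g_q(E_i,E_j)=\delta_{ij}$, then by the isometry $g^L(E_i^L,E_j^L)=\delta_{ij}$, and since each $E_i^L$ lies in $\mathcal{D}$ and the $E_i^L$'s are independent (their images under $\pi_{2*}$ being the independent $E_i$'s), they form an orthonormal frame spanning $\mathcal{D}$.
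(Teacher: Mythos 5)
Your proposal is correct and follows essentially the same route as the paper, which simply observes that the corollary is already contained in the proof of the preceding proposition (the lifts $X_i^L$ span $\mathcal{D}$ and $g^L(X_i^L,X_j^L)=g(X_i,X_j)\circ\pi_2$). You merely make explicit the small steps the paper leaves implicit --- the computation $\eta(X^L)=0$, injectivity via $\pi_{2*}X^L=X$, and the dimension count --- all of which are accurate.
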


In future we we simplify notation and instead of eg.  
\begin{equation*}
g^L(X^L,Y^L) = g(X,Y)\circ \pi_2,
\end{equation*} 
we write 
$
g^L(X^L,Y^L) = g(X,Y),
$
if it does not lead to a confusion. 
Similarly $d\eta(X^L,Y^L)=\Phi(X^L,Y^L)=\omega(X,Y)$. 
For function on K\"ahler base, 
or according to our simplified  notation $X^L\bar{f}=\overline{Xf}$.

\begin{proposition} 
Manifold $\mathcal{M}$, equipped with structure $(\phi^L,\xi,\eta,g^L)$, 
is Sasakian manifold.
\end{proposition}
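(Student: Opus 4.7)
The plan is to verify the characterizing Sasakian identity $(\bar\nabla_X \phi)Y = g^L(X,Y)\xi - \eta(Y)X$ for the Levi-Civita connection $\bar\nabla$ of $g^L$. First I would record two elementary facts that recur throughout: since $\pi_2^*\tau(X)$ depends only on the base point, $\xi$ annihilates it and hence $[\xi, X^L] = 0$; and $d\eta = d(dt + \pi_2^*\tau) = \pi_2^*\omega$, which combined with the definition of $\phi$ gives $d\eta(X^L, Y^L) = \omega(X,Y) = \Phi(X^L,Y^L)$ while $d\eta(\xi, \cdot) = 0$ since $\pi_{2*}\xi = 0$. This already establishes the contact metric condition $d\eta = \Phi$.

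Next I would derive $\bar\nabla$ explicitly by applying the Koszul formula in a local frame built from $\xi$ and lifts $X_i^L$. Because $g^L(X^L, Y^L) = g(X,Y) \circ \pi_2$ and lifts act on such pullback functions exactly as their base counterparts, the three metric-derivative terms and three bracket terms in Koszul for $g^L(\bar\nabla_{X^L}Y^L, Z^L)$ match term by term those of the Koszul formula on the K\"ahler base, yielding $g(\nabla_X Y, Z)$. For the $\xi$-component, all metric-derivative terms vanish and equation \eqref{e:bra:lif} makes the bracket sum collapse to $-d\eta(X^L, Y^L) = -\Phi(X^L, Y^L)$. The same style of computation gives $\bar\nabla_{X^L}\xi = \bar\nabla_\xi X^L = -\phi X^L$ and $\bar\nabla_\xi \xi = 0$. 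Collecting these produces the first structure equation
\[
\bar\nabla_{X^L} Y^L = (\nabla_X Y)^L - \Phi(X^L, Y^L)\xi.
\]

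With $\bar\nabla$ in hand the Sasakian identity falls out quickly. Expanding $(\bar\nabla_{X^L}\phi)Y^L$ by the structure equation, the lifted portion reduces to $((\nabla_X J)Y)^L$ which vanishes by K\"ahlerness of the base, while the $\xi$-portion simplifies to $-\Phi(X^L,(JY)^L)\xi = g(X,Y)\xi$, matching $g^L(X^L, Y^L)\xi - \eta(Y^L)X^L$ on the right-hand side. The three remaining cases with one or both arguments equal to $\xi$ follow immediately from $\bar\nabla_\xi \xi = 0$ and $\bar\nabla_{(\cdot)}\xi = -\phi(\cdot)$. Tensoriality in $X, Y$ together with the fact that $\xi$ and the lifts locally span $T\mathcal M$ then finishes the argument. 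I expect the main obstacle to be careful bookkeeping on the $\xi$-component of the Koszul formula: recognizing that all metric-derivative terms vanish there is what isolates the $d\eta$-contribution and lets the K\"ahlerness of the base promote the contact metric structure to a Sasakian one.
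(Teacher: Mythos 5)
Your proof is correct, but it takes a genuinely different route from the paper. The paper establishes the contact metric condition $d\eta=\Phi$ exactly as you do, but then proves normality directly: using the bracket formula $[X^L,Y^L]=[X,Y]^L-2d\eta(X^L,Y^L)\xi$ it computes $N_\phi(X^L,Y^L)=(N_J(X,Y))^L-2d\eta(X^L,Y^L)\xi$, invokes $N_J=0$ for the integrable complex structure $J$, and concludes $N^{(1)}=0$; Sasakian then means ``contact metric and normal.'' You instead verify the covariant-derivative characterization $(\bar\nabla_X\phi)Y=g^L(X,Y)\xi-\eta(Y)X$, which requires first deriving the structure equation $\bar\nabla_{X^L}Y^L=(\nabla_XY)^L-\Phi(X^L,Y^L)\xi$ from the Koszul formula --- this is precisely the content of the paper's later Proposition~\ref{p:streqs}. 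Your approach is heavier up front but buys two things: it delivers the structure equations as a byproduct rather than as a separate later result, and it obtains the $\xi$-component $-\Phi(X^L,Y^L)$ and the identity $\bar\nabla_{X^L}\xi=-\phi X^L$ directly from Koszul, whereas the paper's proof of Proposition~\ref{p:streqs} imports $\nabla\xi=-\phi$ from the already-established Sasakian property. The paper's route is lighter, needing only the bracket formula and Newlander--Nirenberg. One small point of care in your argument: when matching the $\mathcal D$-component of the Koszul formula with the base Koszul formula, the bracket terms contain extra multiples of $\xi$ coming from $[X^L,Y^L]=[X,Y]^L-2d\eta(X^L,Y^L)\xi$, and you should note these drop out because $g^L(\xi,Z^L)=0$; with that observed, the term-by-term matching you describe is exactly right.
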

\begin{proof}
The first we note that the fundamental form of $\mathcal{M}$ is just 
pullback of K\"ahler form $\omega$, $\Phi = \pi_2^*\omega$. From other 
hand $d\eta = d\pi_2^*\tau = \pi_2^*d\tau = \pi_2^*\omega = \Phi$, by assumption 
about $\tau$, and our above remark. Therefore $\mathcal{M}$ is contact 
metric manifold. To end the proof we directly verify that $\mathcal{M}$, is 
normal $N^{(1)} = 0$, it is enough to verify normality on vector fields 
of form $X^L$, $N^{(1)}(X^L,Y^L)= 0$, as they span the module 
$\Gamma^\infty(\mathcal{D})$, of all sections of contact distribution, and 
to verify directly that $N^{(1)}(\xi,X^L)=0$. As $N^{(1)} = N_\phi +2d\eta\otimes\xi$, 
with help of (\ref{e:bra:lif}) we obtain
\begin{equation}
N_\phi(X^L,Y^L) = (N_J(X,Y))^L -2 d\eta(X^L,Y^L)\xi = -2 d\eta(X^L,Y^L)\xi,
\end{equation}
as $J$ is complex structure, by Newlander-Nirenberg theorem this equivalent to vanishing its Nijenhuis 
torsion $N_J=0$. So 
$$
N^{(1)}(X^L,Y^L) = -2d\eta(X^L,Y^L)\xi +2d\eta(X^L,Y^L)\xi =0.
$$  
The case $N^{(1)}(\xi, X^L)$ is almost evident as for every vector field on K\"ahler base there is 
$[\xi, X^L]=0$, $d\eta(\xi, \cdot) =0$.
\end{proof}

The almost contact metric structure constructed as above  we call Sasakian lift of 
a K\"ahler structure. 
Consequently  manifold itself we call  Sasakian lift of K\"ahler manifold. If it 
is not explicitely stated what particular manifold, we just use a term Sasakian 
lift to emphasize that almost contact metric structure is obtained from 
some K\"ahler base manifold with help of the above described construction.

\subsection{Structure equations}
Here we provide fundamental relations between Levi-Civita connections 
of K\"ahler base and its Saskian lift. Let $\bar\nabla$ denote the 
operator of the covariant derivative of Levi-Civita connection 
of Sasakian lift metric $\bar{\nabla} = LC(g^L)$, while 
$\nabla=LC(g)$, the Levi-Civita connection of  K\"ahler base.

\begin{proposition}
\label{p:streqs}
For vector fields $X$, $Y$ on  K\"ahler base we have
\begin{align}
\label{streqs1}
& \bar{\nabla}_{X^L}\xi = - \phi X^L = -(JX)^L, \\
\label{streqs2}
& \bar{\nabla}_{X^L}Y^L = (\nabla_XY)^L -\Phi(X^L,Y^L)\xi, 
\end{align}
\end{proposition}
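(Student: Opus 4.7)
The plan is to derive both identities from the Koszul formula for $\bar\nabla$, treating $X^L, Y^L, Z^L$ (lifts of base vector fields) and $\xi$ as a convenient generating set for all vector fields on $\mathcal{M}$. The computation rests on three book-keeping facts that I would record first: (a) $g^L(X^L,Y^L) = g(X,Y)\circ\pi_2$, which is $t$-independent and so annihilated by $\xi$; (b) the bracket formula (\ref{e:bra:lif}), $[X^L,Y^L] = [X,Y]^L - 2d\eta(X^L,Y^L)\xi$, which in particular yields $\eta([X^L,Y^L]) = -2d\eta(X^L,Y^L)$; and (c) $[\xi,X^L] = 0$, since $X^L = -\pi_2^*\tau(X)\xi + X$ has components that are pullbacks from the base.

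For equation (\ref{streqs2}), I would pair $\bar\nabla_{X^L}Y^L$ against test vectors of the two types $Z^L$ and $\xi$. Pairing against $Z^L$: each of the six Koszul terms is either of the form $U^L(g(V,W)\circ\pi_2) = (U g(V,W))\circ \pi_2$ or $g^L([U^L,V^L],Z^L)$; since $\eta(Z^L)=0$, the $\xi$-piece of the bracket formula contributes nothing, so the right side reduces word-for-word to the Koszul expression for $2g(\nabla_XY,Z)$ on the base. Hence $g^L(\bar\nabla_{X^L}Y^L,Z^L) = g^L((\nabla_XY)^L,Z^L)$. Pairing against $\xi$: the three directional-derivative terms vanish because $g^L(X^L,\xi)=\eta(X^L)=0$ and because $\xi$ annihilates functions pulled back from the base; the two terms involving $[X^L,\xi]$ and $[Y^L,\xi]$ vanish by (c); the only survivor is $g^L([X^L,Y^L],\xi) = \eta([X^L,Y^L]) = -2d\eta(X^L,Y^L) = -2\Phi(X^L,Y^L)$. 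Dividing by $2$ and combining the two components, and using that $\{X^L\,\text{lifts}\}\cup\{\xi\}$ spans $T\mathcal{M}$, yields (\ref{streqs2}).

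Equation (\ref{streqs1}) now follows by metric compatibility. Testing $\bar\nabla_{X^L}\xi$ against $Y^L$:
\begin{equation*}
g^L(\bar\nabla_{X^L}\xi, Y^L) = X^L g^L(\xi,Y^L) - g^L(\xi, \bar\nabla_{X^L}Y^L) = 0 - \bigl(-\Phi(X^L,Y^L)\bigr) = \Phi(X^L,Y^L),
\end{equation*}
while skew-symmetry of $\Phi$ gives $\Phi(X^L,Y^L) = -g^L(\phi X^L, Y^L)$. Testing against $\xi$ gives $\frac{1}{2}X^L g^L(\xi,\xi)=0$, matching $g^L(-\phi X^L,\xi)=0$. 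Hence $\bar\nabla_{X^L}\xi = -\phi X^L$, and by the definition $\phi X^L = (JX)^L$ this is exactly (\ref{streqs1}).

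The only part that needs care is verifying claim (c) and the annihilation $\xi g^L(X^L,Y^L)=0$, i.e. that the lift $X^L$ is genuinely $t$-independent; this is immediate from the explicit formula $X^L = -\pi_2^*\tau(X)\xi + X$ once one views $X$ on the right-hand side as the canonical vector field on the product that is tangent to $\{t\}\times\mathcal{N}$ and projects to $X$. After that, the remainder is Koszul bookkeeping.
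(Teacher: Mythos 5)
Your proof is correct, and the tangential part (pairing against $Z^L$ and watching the Koszul formula collapse to the base Koszul formula) is exactly what the paper does. Where you genuinely diverge is in the treatment of the $\xi$-direction and of (\ref{streqs1}): the paper obtains (\ref{streqs1}) by simply quoting the general Sasakian identity $\nabla\xi=-\phi$ (legitimate, since the preceding proposition already established that the lift is Sasakian), and then reads off the $\xi$-component of $\bar\nabla_{X^L}Y^L$ from metric compatibility, $g^L(\bar\nabla_{X^L}Y^L,\xi)=-g^L(\bar\nabla_{X^L}\xi,Y^L)=-\Phi(X^L,Y^L)$. You run the dependency the other way: you extract the $\xi$-component of (\ref{streqs2}) directly from the Koszul formula via $\eta([X^L,Y^L])=-2d\eta(X^L,Y^L)$ together with $[\xi,X^L]=0$ and $\xi\,g^L(X^L,Y^L)=0$, and only then deduce (\ref{streqs1}) by compatibility. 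Your route is slightly longer but more self-contained --- it does not lean on the prior proposition that the lift is Sasakian, and would go through verbatim for any metric of the form $\eta\otimes\eta+\pi_2^*g$ before normality or the contact condition is known; the paper's route is shorter because it reuses structure already in hand. Both computations are sound, and your closing remark isolating $[\xi,X^L]=0$ and the $t$-independence of $g^L(X^L,Y^L)$ as the only points needing care is exactly right.
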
 
\begin{proof}
The first structure equation comes from property of any Sasakian 
manifold and the definition of the lift. 
The second structure equation is consequence of  Koszul formula 
for Levi-Civita connection, applied to both  K\"ahler 
base and its lift. Using Koszul formula we need to take 
into account that
\begin{equation*}
g^L(X^L,Y^L) = g(X,Y),\quad X^Lg^L(Y^L,Z^L)=Xg(Y,Z).
\end{equation*}
Therefore 
\begin{align*}
& 2g^L(\bar \nabla_{X^L}Y^L, Z^L) = X^Lg^L(Y^L,Z^L)+Y^Lg^L(X^L,Z^L) - \\ 
& \qquad Z^Lg^L(X^L,Y^L) + g^L([X^L, Y^L], Z^L) + g^L([Z^L,X^L],Y^L) + \\
& \qquad g^L([Z^L,Y^L],X^L) = 2 g(\nabla_XY, Z)\circ\pi_2 = 2g^L((\nabla_XY)^L, Z^L),
\end{align*}
from other hand projection  $\bar\nabla_{X^L}Y^L$ on $\xi$, is given by 
$g^L(\bar\nabla_{X^L}Y^L,\xi) = -g^L(\bar{\nabla}_{X^L}\xi,Y^L) = 
-\Phi(X^L,Y^L)\xi$. Here we use only the fact that on Sasakian manifold always 
$\nabla\xi = -\phi$. 
\end{proof}

Note above formula coincides with formula of commutator of the lifts
\begin{align*}
& [X^L,Y^L] = \bar\nabla_{X^L}Y^L-\bar\nabla_{Y^L}X^L = \\ 
& \qquad (\nabla_XY)^L-\Phi(X^L,Y^L)\xi - (\nabla_YX)^L +\Phi(Y^L,X^L)\xi = \\
& \qquad [X,Y]^L -2\Phi(X^L,Y^L)\xi,  
\end{align*}

In words orthogonal projection of covariant derivative of lifts $\bar\nabla_X^LY^L$, on characteristic 
distribution  is equal exactly to the lift of covariant derivative on K\"ahler base $(\nabla_XY)^L$, 
while projection on direction of Reeb vector field is equal to $-\Phi(X^L,Y^L)\xi$,  however 
note that $\Phi(X^L,Y^L) = \omega(X,Y)\circ \pi_2$, ie. pullback of K\"ahler form on these vector fileds. 
In symbolic terms we can describe this as 
\begin{equation}
\bar \nabla = \nabla^L -(\pi_2^*\omega)\otimes\xi. 
\end{equation}
 
The structure equations in the Proposition {\bf \ref{p:streqs}.} 
remind structure equations 
for hypersurface in Riemannian manifold. However there is remarkable difference:
in case of hypersurface its second fundamental form is symmetric tensor field, 
while in our case the tensor which supposedly plays a role of second fundamental 
form is skew-symmetric. Note that (\ref{streqs2}) {\it is not } a definition of connection. 
$\bar \nabla $ is just Levi-Civita connection of the metric $g^L$. But in particular 
case of lifts of vector fields from K\"ahler base, (\ref{streqs2}) holds true.  

Having  the structure equations as above we proceed to obtain relations between corresponding curvature tensors 
of K\"ahler manifold and its Sasakian lift. 

 
\begin{proposition}
\label{p:curv}
Curvatures $R$ and $\bar{R}$ of K\"ahler base and its Sasakian lift are related by
\begin{align}
\label{l:curv1}
& \bar{R}_{X^LY^L}Z^L  =  (R_{XY}Z)^L  + \Phi(Y^L,Z^L)\phi X^L - 
\Phi(X^L,Z^L)\phi Y^L - \\ 
&\nonumber \qquad 2\Phi(X^L,Y^L)\phi Z^L, 
\end{align}
\end{proposition}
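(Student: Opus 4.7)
The plan is to compute $\bar R_{X^L Y^L} Z^L$ directly from its definition
$$\bar R_{X^L Y^L} Z^L = \bar\nabla_{X^L}\bar\nabla_{Y^L} Z^L - \bar\nabla_{Y^L}\bar\nabla_{X^L} Z^L - \bar\nabla_{[X^L, Y^L]} Z^L,$$
repeatedly substituting the second structure equation (\ref{streqs2}) from Proposition \ref{p:streqs}, together with the first one $\bar\nabla_{X^L}\xi = -\phi X^L$, and the commutator formula (\ref{e:bra:lif}). First I would expand $\bar\nabla_{X^L}\bar\nabla_{Y^L} Z^L = \bar\nabla_{X^L}\bigl((\nabla_X Y)^L - \Phi(Y^L,Z^L)\xi\bigr)$; the Leibniz rule produces four terms, namely $(\nabla_X\nabla_Y Z)^L$, a $\xi$-term $-\Phi(X^L,(\nabla_Y Z)^L)\xi$, a function-derivative term $-X^L(\Phi(Y^L,Z^L))\xi$, and a $\phi X^L$-term $\Phi(Y^L,Z^L)\phi X^L$. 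The antisymmetric term $\bar\nabla_{Y^L}\bar\nabla_{X^L} Z^L$ has the analogous form.

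For the bracket term, I would use (\ref{e:bra:lif}) to write $\bar\nabla_{[X^L,Y^L]} Z^L = \bar\nabla_{[X,Y]^L} Z^L - 2\Phi(X^L,Y^L)\bar\nabla_\xi Z^L$. The first piece gives $(\nabla_{[X,Y]}Z)^L - \Phi([X,Y]^L, Z^L)\xi$; for the second, since $[\xi, Z^L] = 0$ (as noted in the normality proof), $\bar\nabla_\xi Z^L = \bar\nabla_{Z^L}\xi = -\phi Z^L$, which produces the term $+2\Phi(X^L, Y^L)\phi Z^L$ appearing with a minus sign after subtraction, yielding $-2\Phi(X^L,Y^L)\phi Z^L$. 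Collecting all $(\cdot)^L$ pieces immediately assembles into $(R_{XY} Z)^L$, and the three $\phi$-terms already match the claim.

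The main obstacle is verifying that the residual $\xi$-component vanishes, i.e., that
$$-X\omega(Y,Z) + Y\omega(X,Z) - \omega(X,\nabla_Y Z) + \omega(Y, \nabla_X Z) + \omega([X,Y],Z) = 0.$$
Here I would invoke the Kähler condition $\nabla\omega = 0$, which gives $X\omega(Y,Z) = \omega(\nabla_X Y, Z) + \omega(Y, \nabla_X Z)$ and similarly with $X$ and $Y$ swapped. Substitution collapses the expression to $-\omega(\nabla_X Y - \nabla_Y X - [X,Y], Z)$, which vanishes because $\nabla$ is torsion-free. This is the only nontrivial identification, since the remaining bookkeeping is just the Leibniz rule; the $(\cdot)^L$ piece automatically packages into $(R_{XY}Z)^L$ because the lift is $\mathbb{R}$-linear and commutes with the connection modulo an $\xi$-term, and the latter has been checked to cancel.
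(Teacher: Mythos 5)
Your proof is correct and follows essentially the same route as the paper: expand the curvature via the two structure equations and the commutator formula, use $\bar\nabla_\xi Z^L=-\phi Z^L$ (from $[\xi,Z^L]=0$ and torsion-freeness), and check that the residual $\xi$-component cancels. The only difference is in that last cancellation, where the paper observes that $(\bar\nabla_{X^L}\Phi)(Y^L,Z^L)=(\bar\nabla_{Y^L}\Phi)(X^L,Z^L)=0$ on the Sasakian lift, while you push the expression down to the base and use $\nabla\omega=0$ together with torsion-freeness of $\nabla$ --- the two justifications are equivalent.
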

\begin{proof}
By the structure equations (\ref{streqs1}), (\ref{streqs2})
\begin{align}
\label{eq1}
& \bar{\nabla}_{X^L}\bar{\nabla}_{Y^L}Z^L = \bar{\nabla}_{X^L}(\nabla_YZ)^L -
X^L\Phi(Y^L,Z^L)\xi + \\
& \nonumber \qquad \Phi(Y^L,Z^L)\phi X^L = (\nabla_X\nabla_YZ)^L - 
(\Phi(X^L,(\nabla_YZ)^L)+ \\
& \nonumber \qquad X^L\Phi(Y^L,Z^L))\xi + \Phi(Y^L,Z^L)\phi X^L, \\
\label{eq2}
& \bar{\nabla}_{[X^L,Y^L]}Z^L = \bar{\nabla}_{[X,Y]^L}Z^L -
2d\eta(X^L,Y^L)\bar{\nabla}_\xi Z^L = \\
& \nonumber \qquad (\nabla_{[X,Y]}Z)^L - \Phi([X,Y]^L,Z^L)\xi - 
2d\eta(X^L,Y^L)\bar{\nabla}_\xi Z^L.
\end{align}
For the lift of vector field $Z^L$, $[\xi, Z^L] = 0$. Therefore as Levi-Civita connection has no torsion, we have  
\begin{equation}
\label{eq3}
\bar{\nabla}_\xi Z^L = \bar{\nabla}_{Z^L}\xi = -\phi Z^L.
\end{equation}  
For curvature  
$$ \bar{R}_{X^LY^L}Z^L = \bar{\nabla}_{X^L}\bar{\nabla}_{Y^L}Z^L -
\bar{\nabla}_{Y^L}\bar{\nabla}_{X^L}Z^L - 
\bar{\nabla}_{[X^L,Y^L]}Z^L,
$$
in view of (\ref{eq1})-(\ref{eq3}),  we obtain 
\begin{align*}
& \bar{R}_{X^LY^L}Z^L = (R_{XY}Z)^L +\Phi(Y^L,Z^L)\phi X^L - 
\Phi(X^L,Z^L)\phi Y^L + \\  
& \nonumber \qquad ( ( -\bar\nabla_{X^L}\Phi)(Y^L,Z^L)  + (\bar\nabla_{Y^L}\Phi)(X^L,Z^L))\xi -
 2\Phi(X^L,Y^L)\phi Z^L , 
\end{align*}
as manifold is Sasakian 
$(\bar{\nabla}_{X^L}\Phi)(Y^L,Z^L)=(\bar{\nabla}_{Y^L}\Phi)(X^L,Z^L)=0$, vanish.
\end{proof}
 The following proposition describes relation between Ricci tensors of K\"ahler base and 
its lift
\begin{proposition}
\label{p:ric}
Ricci tensors and scalar curvatures of K\"ahler base and its Sasakian lift 
are related by
\begin{align}
\label{r1}
& \bar{R}ic(X^L,Y^L) = Ric(X,Y)-2g(X,Y), 
\end{align}
in particular we have for scalar curvatures of K\"ahler base and 
Sasakian lift $s$, $\bar{s}$ 
\begin{align}
\label{r2}
& \bar{s}= s-2n.
\end{align}
\end{proposition}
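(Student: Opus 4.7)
The plan is to trace the curvature identity from Proposition \ref{p:curv} against an orthonormal frame adapted to the lift, and exploit the compatibility of $\phi$ with $g^L$.

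First, I fix a local orthonormal frame $(E_1,\dots,E_{2n})$ on the K\"ahler base. By the Corollary following the first Proposition, $(E_1^L,\dots,E_{2n}^L,\xi)$ is then orthonormal on $\mathcal M$, and spans $\mathcal D\oplus\{\xi\}$. I compute
\[
\bar{Ric}(X^L,Y^L) = \sum_{i=1}^{2n} g^L\!\bigl(\bar R_{E_i^L X^L}Y^L,E_i^L\bigr) + g^L\!\bigl(\bar R_{\xi X^L}Y^L,\xi\bigr).
\]
Substituting the formula of Proposition \ref{p:curv} into the sum, the first term contributes $\sum_i g(R_{E_iX}Y,E_i)=Ric(X,Y)$. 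The term $\Phi(X^L,Y^L)g^L(\phi E_i^L,E_i^L)$ vanishes because $\phi$ is skew with respect to $g^L$. The remaining two terms combine into
\[
-\sum_i \Phi(E_i^L,Y^L)\,g^L(\phi X^L,E_i^L) - 2\sum_i \Phi(E_i^L,X^L)\,g^L(\phi Y^L,E_i^L),
\]
and using $g^L(\phi A,B)=-\Phi(A,B)$ together with the fact that $\phi X^L,\phi Y^L\in\mathcal D$, both sums collapse to $\sum_i g^L(E_i^L,\phi X^L)g^L(E_i^L,\phi Y^L) = g^L(\phi X^L,\phi Y^L)=g(X,Y)$, the last equality because the base is K\"ahler and $\eta(X^L)=\eta(Y^L)=0$. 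Together this gives a contribution $-3\,g(X,Y)$.

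For the $\xi$-term I use the Sasakian identities \eqref{l:curv2} together with the symmetry $g^L(\bar R_{\xi X^L}Y^L,\xi)=g^L(\bar R_{Y^L\xi}\xi,X^L)$, so
\[
g^L(\bar R_{\xi X^L}Y^L,\xi) = g^L\!\bigl(Y^L-\eta(Y^L)\xi,\,X^L\bigr) = g(X,Y).
\]
Adding everything, the total becomes $Ric(X,Y)-3g(X,Y)+g(X,Y)=Ric(X,Y)-2g(X,Y)$, proving \eqref{r1}.

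For the scalar curvature I take another trace:
\[
\bar s = \sum_{i=1}^{2n}\bar{Ric}(E_i^L,E_i^L) + \bar{Ric}(\xi,\xi).
\]
By the formula just proven, the first sum equals $\sum_i[Ric(E_i,E_i)-2g(E_i,E_i)] = s-4n$, while \eqref{l:ric:xi} gives $\bar{Ric}(\xi,\xi)=2n$, so $\bar s = s-4n+2n = s-2n$, which is \eqref{r2}. The only nontrivial step is the algebraic simplification of the $\Phi$-terms; all three combine to $-3\sum_i\Phi(E_i^L,X^L)\Phi(E_i^L,Y^L)$ (equal to $-3g(X,Y)$), and it is exactly the $\xi$-contribution of $+g(X,Y)$ that reduces the coefficient from $-3$ to the final $-2$.
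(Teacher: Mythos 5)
Your proof is correct and follows essentially the same route as the paper: trace the curvature formula of Proposition \ref{p:curv} over the adapted orthonormal frame $(\xi,E_1^L,\dots,E_{2n}^L)$, where the three $\Phi$-terms combine to $-3\sum_i\Phi(E_i^L,X^L)\Phi(E_i^L,Y^L)=-3g(X,Y)$ and the $\xi$-direction contributes $+g(X,Y)$ via the Sasakian identities \eqref{l:curv2}, then trace once more for the scalar curvature using \eqref{l:ric:xi}. Nothing to add.
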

\begin{proof}
In the proof we use adopted local orthonormal frame $(\xi, E_1^L,\ldots E_{2n}^L)$,  
where $(E_1,\ldots E_{2n})$, is local orthonormal frame on K\"ahler base. Then
\begin{equation}
\label{ric1}
\bar{R}ic(X^L,Y^L) = \bar{R}(\xi, X^L,Y^L,\xi) + 
\sum\limits_{i=1}^{2n}\bar{R}(E_i^L,X^L,Y^L,E_i^L),
\end{equation}  
by the Proposition {\bf \ref{p:curv}.}, (\ref{l:curv1}), and 
curvature identities for Sasakian manifold (\ref{l:curv2}), 
we obtain 
\begin{align}
\label{tr:curv}
& \sum\limits_{i=1}^{2n}\bar{R}(E_i^L,X^L,Y^L,E_i^L) = 
\sum\limits_{i=1}^{2n}R(E_i,X,Y,E_i) - \\
& \nonumber \qquad 3\sum\limits_{i=1}^{2n}\Phi(E_i^L,X^L)\Phi(E_i^L,Y^L) = 
Ric(X,Y)-3g(JX,JY)= \\ 
& \nonumber \qquad Ric(X,Y)-3g(X,Y), \\
\label{xi-sect}
& \bar{R}(\xi,X^L,Y^L,\xi) = \bar{R}(X^L,\xi,\xi,Y^L) =g^L(X^L,Y^L)=g(X,Y),
\end{align} 
now with help of (\ref{ric1})-(\ref{xi-sect}), we find
\begin{equation*}
\bar{R}ic(X^L,Y^L)=Ric(X,Y)-2g(X,Y).
\end{equation*}
For the scalar curvature of the lift 
$
\bar{s}= \bar{R}ic(\xi,\xi)+\sum_{i=1}^{2n}\bar{R}ic(E_i^L,E_i^L),
$
and by (\ref{l:ric:xi}), (\ref{r1}), 
\begin{equation*}
\bar{s}=2n+\sum\limits_{i=1}^{2n}(Ric(E_i,E_i)-2g(E_i,E_i)) =2n +s -4n = s-2n.
\end{equation*}
\end{proof}

\begin{proposition}
On Sasakian lift tensor field $\bar{\rho}(\cdot,\cdot)=\bar{R}ic(\cdot,\phi \cdot)$, 
is a closed 2-form, moreover
\begin{equation}
\bar{\rho} = \pi_2^*\rho -2\pi_2^*\omega,
\end{equation}
that is $\bar{\rho}$ is a pullback of difference of Ricci form and twice 
of K\"ahler 
form.
\end{proposition}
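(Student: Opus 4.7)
The plan is to evaluate $\bar\rho$ on an adapted frame, reduce to base quantities via the Ricci formula already established in Proposition \ref{p:ric}, and then deduce both skew-symmetry and closedness from the identification $\bar\rho = \pi_2^*\rho - 2\pi_2^*\omega$.

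First I would evaluate $\bar\rho(U,V) = \bar{R}ic(U,\phi V)$ on the three characteristic types of pairs. On two lifts, $\phi Y^L = (JY)^L$, so by Proposition \ref{p:ric},
\begin{equation*}
\bar\rho(X^L,Y^L) = \bar{R}ic(X^L,(JY)^L) = Ric(X,JY) - 2g(X,JY) = \rho(X,Y) - 2\omega(X,Y).
\end{equation*}
For the mixed pair, $\phi\xi = 0$ gives $\bar\rho(X^L,\xi) = 0$, and since on a Sasakian manifold $\bar{R}ic(\xi,\cdot) = 2n\eta(\cdot)$ and $\eta(\phi Y^L)=0$, we also get $\bar\rho(\xi,Y^L) = 2n\eta(\phi Y^L) = 0$, and trivially $\bar\rho(\xi,\xi)=0$. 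Comparing with $\pi_2^*\rho$ and $\pi_2^*\omega$, which vanish whenever a $\xi$ argument is inserted (as $\pi_{2*}\xi = 0$) and which reduce to $\rho(X,Y)$, $\omega(X,Y)$ on lifted pairs, this establishes the identity $\bar\rho = \pi_2^*\rho - 2\pi_2^*\omega$ value-by-value on a spanning set of vector fields.

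Skew-symmetry of $\bar\rho$ is then automatic, since the right-hand side is a pullback of 2-forms; alternatively, it can be obtained directly from the Sasakian identity $\bar{R}ic(\phi X^L,\phi Y^L) = \bar{R}ic(X^L,Y^L) - 2n\eta(X^L)\eta(Y^L)$, which after substituting $Y^L \mapsto \phi Y^L$ and using $\bar{R}ic(\phi X^L,\xi)=0$ gives $\bar{R}ic(\phi X^L,Y^L) = -\bar{R}ic(X^L,\phi Y^L)$. For closedness, I would invoke two classical facts on the K\"ahler base: the Ricci form $\rho$ of a K\"ahler manifold is closed, and the K\"ahler form $\omega$ is closed. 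Since pullback commutes with the exterior derivative,
\begin{equation*}
d\bar\rho = \pi_2^*(d\rho) - 2\pi_2^*(d\omega) = 0.
\end{equation*}

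I do not anticipate any real obstacle: the whole argument is a direct computation, once one notices that the lift is tailored so that the three tensors $\bar{R}ic$, $\phi$, $\xi$ behave transparently with respect to the projection $\pi_2$. The only subtlety is to keep track of the $\xi$-components, which is handled by the two standard Sasakian identities $\phi\xi = 0$ and $\bar{R}ic(\xi,\cdot) = 2n\eta(\cdot)$ recalled in the Preliminaries.
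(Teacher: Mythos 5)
Your proposal is correct and follows essentially the same route as the paper: evaluate $\bar{R}ic(X^L,\phi Y^L)$ via Proposition \ref{p:ric} to get $\rho(X,Y)-2\omega(X,Y)$, check that all $\xi$-insertions vanish, and deduce closedness from $d\rho=d\omega=0$ together with $\pi_2^*$ commuting with $d$. Your treatment of the $\xi$-components and of skew-symmetry is merely more explicit than the paper's, which states these facts without elaboration.
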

\begin{proof}
We have 
$
\bar{\rho}(X^L,Y^L) = \bar{R}ic(X^L,\phi Y^L) = \bar{R}ic(X^L,(JY)^L),
$
and in virtue of the Proposition {\bf \ref{p:ric}.}, 
\begin{align*}
& \bar{R}ic(X^L,(JY)^L) = Ric(X,JY) -2g(X,JY) = \\
& \qquad \rho(X,Y)-2\omega(X,Y),
\end{align*}
clearly $\bar{\rho}(X^L,\xi)=\bar{\rho}(\xi,X^L)=0$, therefore 
$\bar{\rho}$ is skew-symmetric and closed, as both Ricci and K\"ahler forms 
are closed.
\end{proof}

Here are some corollaries of obtained results.
\begin{theorem}
\label{th:chc}
If K\"ahler base has constant holomorphic sectional curvature $c=const.$, then 
its Sasakian lift is Sasakian manifold of constant $\phi$-sectional curvature 
$\bar{c} = c-3$.
\end{theorem}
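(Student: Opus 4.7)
The plan is to apply the curvature identity from Proposition \ref{p:curv} directly to the pair $(X^L, \phi X^L)$ and read off the $\phi$-sectional curvature from the resulting expression. Pick any point $p = (t,q) \in \mathcal M$ and any unit vector $\bar X \in \mathcal D_p$. By the Corollary following the proof that $(\phi,\xi,\eta,g^L)$ is an almost contact metric structure, the lift operation is an isometry $T_q\mathcal N \to \mathcal D_p$, so $\bar X = X^L$ for a uniquely determined unit $X \in T_q\mathcal N$. The $\phi$-sectional curvature $\bar K(\bar X,\phi\bar X)$ is by definition $\bar R(X^L,\phi X^L,\phi X^L,X^L)$, so the theorem reduces to evaluating this expression via (\ref{l:curv1}) and comparing with the holomorphic sectional curvature $R(X,JX,JX,X) = c$ of the base.

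First I would compute the scalar coefficients that appear in (\ref{l:curv1}) when $Y^L = Z^L = \phi X^L = (JX)^L$. Skew-symmetry of $\Phi$ gives $\Phi(Y^L,Z^L) = \Phi(\phi X^L,\phi X^L) = 0$, while $\Phi(X^L,\phi X^L) = g^L(X^L,\phi^2 X^L) = -g^L(X^L,X^L) = -1$ since $\eta(X^L)=0$ and $X^L$ is a unit vector. With $\phi(\phi X^L) = -X^L$ the identity (\ref{l:curv1}) collapses to
\begin{equation*}
\bar R_{X^L\phi X^L}\phi X^L = (R_{X,JX}JX)^L - \phi^2 X^L - 2\phi^2 X^L = (R_{X,JX}JX)^L - 3 X^L.
\end{equation*}

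Next I would take the $g^L$-inner product of both sides with $X^L$. On the right, $g^L((R_{X,JX}JX)^L,X^L) = g(R_{X,JX}JX,X) = R(X,JX,JX,X) = c$ because the lift is an isometry on $\mathcal D$, and $g^L(-3X^L,X^L) = -3$. Therefore
\begin{equation*}
\bar R(X^L,\phi X^L,\phi X^L,X^L) = c - 3,
\end{equation*}
which, being independent of both the point $p$ and the choice of unit $\bar X = X^L \in \mathcal D$, shows that $\mathcal M$ has constant $\phi$-sectional curvature $\bar c = c-3$.

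There is essentially no obstacle beyond bookkeeping — once (\ref{l:curv1}) is in hand, the theorem is a one-line substitution. The only subtlety worth double-checking is the sign of $\Phi(X^L,\phi X^L)$ and the sign convention used for $\phi$-sectional curvature, since a mismatch there would flip the $-3$ to $+3$; confirming that the curvature formulas (\ref{l:curv2}) on a Sasakian manifold yield $\phi$-sectional curvature $+1$ on the standard sphere (where $R(X,JX,JX,X)=4$ on the base $\mathbb{CP}^n$) provides a reassuring consistency check: $4 - 3 = 1$, as required.
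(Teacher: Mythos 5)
Your proof is correct and follows essentially the same route as the paper: substitute $Y^L=Z^L=\phi X^L$ into \eqref{l:curv1}, use that the lift is a pointwise isometry on $\mathcal D$, and read off $\bar R(X^L,\phi X^L,\phi X^L,X^L)=c-3$. One cosmetic slip: the intermediate expression $-\phi^2X^L-2\phi^2X^L$ literally equals $+3X^L$ since $\phi^2X^L=-X^L$; the individual terms actually evaluate to $-X^L$ and $-2X^L$, so your final line $(R_{X,JX}JX)^L-3X^L$ and the conclusion are nonetheless correct.
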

\begin{proof}
Let fix a point $(t,q)\in \mathcal{M}$, and let $v\in \mathcal{D}_{(t,q)}$, be 
unit vector, then $\phi$-sectional curvature $K_\phi(v)$, is a sectional 
curvature of plane $(v,\phi v)$. Hence 
\begin{equation*}
K_\phi(v) = \bar{R}(v,\phi v,\phi v,v) =  g^L(\bar{R}_{v\phi v}\phi v, v).
\end{equation*} 
As $(\cdot)^L$ is point-wise linear isometry between $T_q\mathcal{N}$ and 
$\mathcal{D}_{(t,q)}$, there is local vector field $X$ on K\"ahler 
base, such that $X^L= v$ at the point $(t,q)$. We can assume that $X$ is 
normalized. In view of the Proposition {\bf \ref{p:curv}}, eq. (\ref{l:curv1}),
having in mind that $g^L(X^L,X^L)=g(X,X)$, and 
\begin{equation*}
g^L((R_{XJX}JX)^L,X^L) = g(R_{XJX}JX,X), 
\end{equation*}
we obtain
\begin{equation*}
\bar{R}(X^L,\phi X^L,\phi X^L,X^L) = R(X,JX,JX,X)-3g^2(X,X)= c-3,
\end{equation*}
where $c=R(X,JX,JX,X)$ is holomorphic sectional curvature of K\"ahler base. By 
assumption $c=const$, in particular at the point $(t,q)$, 
\begin{equation*}
\bar{R}(X^L,\phi X^L,\phi X^L,X^L) = K_\phi(v)= c-3.
\end{equation*}
As point and vector are arbitrary this shows that $\mathcal M$ has constant $\phi$-sectional curvature $c-3$.
\end{proof}

\begin{theorem}
\label{th:ein}
If K\"ahler base is K\"ahler-Einstein manifold with Einstein constant $c=const.$, then
its Sasakian lift is $\eta$-Einstein manifold 
\begin{equation}
\bar{R}ic = (c-2) \bar{g}+(2n-c+2)\eta\otimes\eta.
\end{equation} 
In particular is Einstein if and only if $c=2n+2$.
\end{theorem}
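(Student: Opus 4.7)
The plan is to combine the Ricci relation from Proposition~\ref{p:ric} with the standard Sasakian identity $Ric(\cdot,\xi)=2n\,\eta(\cdot)$ recalled in the preliminaries. Since the lifted vectors $X^L$ together with $\xi$ span the tangent spaces of $\mathcal M$, it suffices to check the claimed formula on the three kinds of pairs: $(X^L,Y^L)$, $(X^L,\xi)$, and $(\xi,\xi)$.

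First I would evaluate on a pair $(X^L,Y^L)$. By Proposition~\ref{p:ric} we have $\bar{R}ic(X^L,Y^L)=Ric(X,Y)-2g(X,Y)$, and under the Einstein hypothesis $Ric(X,Y)=cg(X,Y)$, so this equals $(c-2)g(X,Y)=(c-2)\bar g(X^L,Y^L)$. On the other hand $\eta(X^L)=\eta(Y^L)=0$, so the $\eta\otimes\eta$ term on the right-hand side of the asserted formula vanishes on such pairs, matching exactly. Next, the mixed case $(X^L,\xi)$: by (\ref{l:ric:xi}) we get $\bar{R}ic(X^L,\xi)=2n\,\eta(X^L)=0$, and on the right $(c-2)\bar g(X^L,\xi)+(2n-c+2)\eta(X^L)\eta(\xi)=0+0=0$. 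Finally, for $(\xi,\xi)$, the Sasakian identity gives $\bar{R}ic(\xi,\xi)=2n$, while the right-hand side of the formula yields $(c-2)\cdot 1+(2n-c+2)\cdot 1=2n$. All three checks agree, proving the first assertion.

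For the Einstein characterization, the lift is Einstein precisely when $\bar{R}ic=\lambda\bar g$ for some constant $\lambda$. Comparing with the formula just established, this forces the coefficient of $\eta\otimes\eta$ to vanish, i.e.\ $2n-c+2=0$, equivalently $c=2n+2$, in which case $\lambda=c-2=2n$. Conversely if $c=2n+2$ then $\bar{R}ic=2n\bar g$ and the lift is Einstein.

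There is no real obstacle here: the whole argument is a bookkeeping application of Proposition~\ref{p:ric} and the boundary identity $Ric(\cdot,\xi)=2n\eta(\cdot)$ on the Sasakian side. The only small point to be careful about is remembering that $\bar g=\eta\otimes\eta+\pi_2^*g$, so that the splitting of $\bar g$ on the $(X^L,Y^L)$, $(X^L,\xi)$, $(\xi,\xi)$ pieces correctly separates the two tensorial contributions on the right.
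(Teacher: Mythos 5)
Your proof is correct and is exactly the argument the paper intends: the theorem is presented there as an immediate corollary of Proposition~\ref{p:ric} with no written proof, and your case-by-case check on $(X^L,Y^L)$, $(X^L,\xi)$, $(\xi,\xi)$ using $\bar{R}ic(X^L,Y^L)=Ric(X,Y)-2g(X,Y)$ together with the Sasakian identity $\bar{R}ic(\cdot,\xi)=2n\,\eta(\cdot)$ supplies precisely the omitted verification. The Einstein characterization $c=2n+2$ also follows as you state.
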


The particular case is when base K\"aler manifold has constant holomorphic 
curvature $c=4$. 
\begin{theorem}
If K\"ahler base is locally isometric to complex projective space 
$\mathbb{C}P^n$, equipped with Fubini-Study metric of constant holomorphic 
curvature $c=4$, then its lift is locally isometric to unit sphere 
$\mathbb{S}^{2n+1}\subset\mathbb{C}^{n+1}$, equipped with its canonical Sasakian 
structure of constant sectional curvature $\bar{c}=1$.
\end{theorem}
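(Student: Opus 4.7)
The plan is to apply Theorem~\ref{th:chc} to deduce constant $\phi$-sectional curvature $\bar c = c - 3 = 1$ for the lift, and then to upgrade this information to full constant sectional curvature $1$ by directly computing the Riemann tensor via Proposition~\ref{p:curv}. The key ingredient on the base side is the classical Fubini--Study curvature formula
\[
R_{XY}Z = g(Y,Z)X - g(X,Z)Y + g(JY,Z)JX - g(JX,Z)JY + 2g(X,JY)JZ,
\]
valid on $\mathbb{C}P^n$ with holomorphic sectional curvature $c = 4$.

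Substituting this expression into the structure relation (\ref{l:curv1}) and using $\phi X^L = (JX)^L$ together with $\Phi(X^L,Y^L) = g(X,JY) = -g(JX,Y)$, the three complex terms appearing in $(R_{XY}Z)^L$ should cancel exactly against the three correction terms $\Phi(Y^L,Z^L)\phi X^L - \Phi(X^L,Z^L)\phi Y^L - 2\Phi(X^L,Y^L)\phi Z^L$, leaving
\[
\bar R_{X^L Y^L} Z^L = g^L(Y^L,Z^L)\, X^L - g^L(X^L,Z^L)\, Y^L.
\]
For planes involving $\xi$, the Sasakian identities (\ref{l:curv2}) yield $\bar R_{X^L Y^L}\xi = 0$ and $\bar R_{X^L \xi}\xi = X^L$ (using $\eta(X^L)=\eta(Y^L)=0$), which are precisely the values dictated by the constant sectional curvature $1$ formula $\bar R_{AB}C = g^L(B,C)A - g^L(A,C)B$ on the respective triples. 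Combined with the standard curvature symmetries this identifies $\bar R$ pointwise with the curvature tensor of a space form of sectional curvature $1$.

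The conclusion then follows from two classical rigidity statements: any $(2n+1)$-dimensional Riemannian manifold of constant sectional curvature $1$ is locally isometric to the round sphere $\mathbb{S}^{2n+1}$, and Sasakian space forms of prescribed constant $\phi$-sectional curvature are unique up to local Sasakian isomorphism. The latter matches the almost contact metric structure of the lift with the canonical Sasakian structure of $\mathbb{S}^{2n+1} \subset \mathbb{C}^{n+1}$, namely the one induced by the Hopf fibration $\mathbb{S}^{2n+1} \to \mathbb{C}P^n$ over the Fubini--Study base. The main obstacle is purely computational, consisting in the careful sign bookkeeping needed to see the three $J$-terms of the complex projective curvature cancel against the three correction terms of Proposition~\ref{p:curv}; the rigidity part of the argument is standard.
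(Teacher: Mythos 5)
Your argument is correct, and the computation does go through: with the Fubini--Study tensor for $c=4$ inserted into (\ref{l:curv1}), the lifted $J$-terms are $-g(Y,JZ)(JX)^L+g(X,JZ)(JY)^L+2g(X,JY)(JZ)^L$, which cancel exactly against $\Phi(Y^L,Z^L)\phi X^L-\Phi(X^L,Z^L)\phi Y^L-2\Phi(X^L,Y^L)\phi Z^L$ since $\Phi(X^L,Y^L)=g(X,JY)$ and $\phi X^L=(JX)^L$; together with the Sasakian identities (\ref{l:curv2}) and the curvature symmetries this determines the full tensor as that of a space form of curvature $1$. The paper reaches the same conclusion by a shorter but less self-contained route: after Theorem \ref{th:chc} gives $\bar c=c-3=1$, it simply invokes the classical fact (Blair, Theorem 7.19) that constant $\phi$-sectional curvature completely determines the curvature operator of a Sasakian manifold, the resulting expression reducing for $\bar c=1$ to the round-sphere curvature. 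So where the paper cites the general classification of Sasakian space form curvature tensors, you instead feed the explicit $\mathbb{C}P^n$ curvature into Proposition \ref{p:curv} and watch the correction terms cancel; your version avoids the external theorem at the cost of needing the explicit Fubini--Study formula, and it makes visible the pleasant structural fact that the three complex terms of the base curvature are precisely absorbed by the three lift corrections. Note also that you are more careful than the paper about the final identification: a Riemannian local isometry to $\mathbb{S}^{2n+1}$ alone does not yet match the contact structures, and your appeal to local uniqueness of Sasakian space forms of a given $\phi$-sectional curvature is the step the paper leaves implicit.
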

\begin{proof}
By Theorem {\bf \ref{th:chc}}, lift of the K\"ahler base of constant 
holomorphic curvature is Sasakian manifold of constant $\phi$-sectional 
curvature $\bar{c}=c-3$. Note that curvature operator of manifold with constant 
$\phi$-sectional curvature is completely determined, cf. {\bf Theorem  7.19, p. 139} in 
(\cite{Blair}). 
\end{proof}

Note that the case of dimension three is exceptional. As every two-dimensional K\"ahler manifold is Einstein, its lift is $\eta$-Einstein, yet coefficient $c$ now is 
in general a some function - in fact determined by Gaussian curvature of 2-dimensional 
base. 

\section{$\mathcal D_{\alpha,\beta}$-homothety of Sasakian manifolds and 
$\alpha$-Sasakian manifolds}
In this section we provide detailed study of $\mathcal D_{\alpha,\beta}$-homothety 
Sasakian manifold. Fundamental result here is that image of Sasakian 
manifold by some $\mathcal D_{\alpha,\beta}$-homothety with parameters $\alpha$, 
$\beta > 0$ is $\frac{\beta}{\alpha}$-Sasakian manifold.  

Let $\mathcal{M}$, be a Sasakian manifold with almost 
contact metric structure $(\phi,\xi,\eta,g)$. For real 
positive parameters $\alpha$, $\beta$, let consider 
new structure on $\mathcal{M}$, $(\phi'=\phi,\xi',\eta',g')$, 
given by
\begin{align}
& \xi' = \frac{1}{\beta}\xi, \quad
\eta'=\beta\eta, \\
& g' = \alpha g +(\beta^2-\alpha)\eta\otimes\eta.
\end{align}
It is useful to have explicitly inverse map for metric 
\begin{align}
 g = \frac{1}{\alpha}g'+(\frac{1}{\beta^2}-
 \frac{1}{\alpha})\eta'\otimes\eta'.
\end{align}

Here our main goal is to study relations between Levi-Civita 
connection $\nabla = LC(g)$, $\nabla' = LC(g')$, Riemann 
curvatures and in particular Ricci tensors. Note 
that in general deformed structure is not contact metric. 
It satisfies weaker condition
\begin{equation}
d\eta' = \frac{\beta}{\alpha}\Phi',
\end{equation}
where $\Phi'(X,Y)=g'(X,\phi Y) = \alpha \Phi(X,Y)$. 

\begin{proposition}
Let $\nabla$ be Levi-Civita connection of Sasakian manifold, 
and $\nabla'$ Levi-Civita connection of metric obtained 
by $\mathcal D_{\alpha,\beta}$-homotheties of Sasakian metric. 
Connections are related by the following formula
\begin{equation}
\label{e:txy}
\nabla_XY= \nabla'_XY+\frac{\beta^2-\alpha}{\alpha\beta}(\eta'(X)\phi Y +
\eta'(Y)\phi X). 
\end{equation} 
\end{proposition}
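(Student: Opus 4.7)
My plan is to invoke the standard identity relating two Levi-Civita connections on the same manifold. Set $A(X,Y) = \nabla'_XY - \nabla_XY$; since both connections are torsion-free, $A$ is a symmetric $(1,2)$-tensor. Polarizing $\nabla' g' = 0$ against the decomposition $\nabla'_XY = \nabla_XY + A(X,Y)$ yields
$2g'(A(X,Y),Z) = (\nabla_Xg')(Y,Z) + (\nabla_Yg')(X,Z) - (\nabla_Zg')(X,Y).$
So the entire problem reduces to computing $\nabla g'$, after which I will put the right-hand side in the form $2g'(V,Z)$ and read off $A(X,Y)=V$ by non-degeneracy of $g'$.

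Since $\nabla g = 0$ and $g' = \alpha g + (\beta^2-\alpha)\eta\otimes\eta$, only $\nabla(\eta\otimes\eta)$ contributes. On a Sasakian manifold, $\nabla_X\xi = -\phi X$ together with $\eta(Y) = g(\xi,Y)$ gives $(\nabla_X\eta)(Y) = -g(\phi X,Y) = \Phi(X,Y)$, and therefore
$(\nabla_X g')(Y,Z) = (\beta^2-\alpha)\bigl[\Phi(X,Y)\eta(Z) + \Phi(X,Z)\eta(Y)\bigr].$
Cycling in $X,Y,Z$ and using skew-symmetry of $\Phi$, the three $\Phi(X,Y)\eta(Z)$-type contributions cancel in pairs, leaving the clean expression
$2g'(A(X,Y),Z) = 2(\beta^2-\alpha)\bigl[\Phi(X,Z)\eta(Y) + \Phi(Y,Z)\eta(X)\bigr].$

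To match the shape of (\ref{e:txy}) I use three observations: $\Phi(X,Z) = -g(\phi X,Z)$ by skew-adjointness of $\phi$; the vector $\phi X$ lies in $\Gamma^\infty(\mathcal D)$, so $g'(\phi X,Z) = \alpha g(\phi X,Z)$; and $\eta = \beta^{-1}\eta'$. Substituting, the right-hand side becomes $2g'\bigl(-\tfrac{\beta^2-\alpha}{\alpha\beta}(\eta'(X)\phi Y + \eta'(Y)\phi X),\,Z\bigr)$, which identifies $A(X,Y)$ and, upon rewriting $\nabla_XY = \nabla'_XY - A(X,Y)$, gives the claimed formula.

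I do not expect a serious obstacle: the one delicate point is sign bookkeeping, in particular keeping the identities $\Phi(X,Y) = g(X,\phi Y) = -g(\phi X,Y)$ consistent throughout, and checking that the cyclic combination really kills the $\Phi(X,Y)\eta(Z)$ term while producing exactly the symmetric combination $\eta'(X)\phi Y + \eta'(Y)\phi X$ (and not a variant with the wrong sign or the wrong index pattern).
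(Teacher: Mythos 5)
Your argument is correct and is essentially the paper's own proof in a cleaner package: the paper subtracts the two Koszul formulas to compute the symmetric difference tensor $T=\nabla-\nabla'$ and then invokes $(\nabla_X\eta)(Y)+(\nabla_Y\eta)(X)=0$ and $d\eta=\Phi$, whereas you reach the same point via the standard identity $2g'(A(X,Y),Z)=(\nabla_Xg')(Y,Z)+(\nabla_Yg')(X,Z)-(\nabla_Zg')(X,Y)$ together with $(\nabla_X\eta)(Y)=\Phi(X,Y)$, which avoids the Lie-bracket bookkeeping entirely. All the sign and normalization steps ($\Phi(X,Z)=-g(\phi X,Z)$, $g'(\phi X,Z)=\alpha g(\phi X,Z)$, $\eta=\beta^{-1}\eta'$) check out and yield exactly \eqref{e:txy}.
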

\begin{proof}
The proof is rather standard with help of Koszul formula for 
Levi-Civita connection. Let denote by $T_XY$, the difference 
tensor $\nabla_XY=\nabla'_XY+T_XY$. As connections are torsion-less
$T_XY$ is symmetric $T_XY=T_YX$. Therefore 
\begin{align*}
& -2g'(T_XY,Z)= 2g'(\nabla'_XY,Z) - 2g'(\nabla_XY,Z) = 
(\beta^2-\alpha)(X \eta(Y)\eta(Z)+ \\
& \qquad Y\eta(X)\eta(Z) - Z\eta(X)\eta(Y)) + 
(\beta^2-\alpha)(\eta([X,Y])\eta(Z) + \\
& \qquad \eta([Z,X])\eta(Y) + \eta([Z,Y])\eta(X)) - 
2(\beta^2-\alpha)\eta(\nabla_XY)\eta(Z),
\end{align*}
note $X\eta(Y)\eta(Z) = (X\eta(Y))\eta(Z)+\eta(Y)(X\eta(Z))$, just 
well-known Leibniz rule, moreover 
$\eta([X,Y])= \eta(\nabla_XY)-\eta(\nabla_YX)$. Therefore using Leibniz 
rule after regrouping we find
\begin{align*}
& (\beta^2-\alpha)(X\eta(Z)-Z\eta(X)+\eta([Z,X]))\eta(Y) + \\
& \qquad (\beta^2-\alpha)(Y\eta(Z)-Z\eta(Y)+\eta([Z,Y]))\eta(X) + \\
& \qquad (\beta^2-\alpha)(X\eta(Y)-\eta(\nabla_XY))\eta(Z)) + \\
& \qquad (\beta^2-\alpha)(Y\eta(X)-\eta(\nabla_YX))\eta(Z)) = \\
& \qquad 2(\beta^2-\alpha)(d\eta(X,Z)\eta(Y)+d\eta(Y,Z)\eta(X)) + \\
& \qquad (\beta^2-\alpha)((\nabla_X\eta)(Y)+(\nabla_Y\eta)(X))\eta(Z),
\end{align*}
however on Sasakian manifold $(\nabla_X\eta)(Y)+(\nabla_Y\eta)(X)=0$, 
thus  we obtain
\begin{equation*}
-g'(T_XY,Z)= (\beta^2-\alpha)(d\eta(X,Z)\eta(Y)+d\eta(Y,Z)\eta(X)).
\end{equation*}
Note $d\eta=\frac{1}{\beta}d\eta' = \frac{1}{\alpha}\Phi'$, in terms 
of deformed structure the above equation reads
\begin{equation*}
-g'(T_XY,Z)= \frac{\beta^2-\alpha}{\alpha\beta}( \Phi'(X,Z))\eta'(Y)+
\Phi'(Y,Z)\eta'(X)),
\end{equation*}
from $\Phi'(X,Y)=g'(X,\phi Y)= -g'(\phi X,Y)$, we finally obtain
\begin{equation}
T_XY = \frac{\beta^2-\alpha}{\alpha\beta}(\eta'(X)\phi Y +\eta'(Y)\phi X),
\end{equation}
ie. (\ref{e:txy}).
\end{proof}
Of course once we have explicit form of tensor $T_XY$ we can directly verify that $\nabla'g'=0$, and 
use the fact that Levi-Civita connection is unique connection which is both torsion-less and 
$\nabla'g'$.  Such direct verification provides alternative proof of the above statement.  

For further reference we set 
$c=c_{\alpha,\beta}=\frac{\beta^2-\alpha}{\alpha\beta}$.
\begin{proposition}
Covariant derivative $\nabla'\phi$ is 
given by
\begin{equation}
\label{e:nabpfi}
(\nabla'_X\phi)Y = \frac{\beta}{\alpha}(g'(X,Y)\xi'-\eta'(Y)X). 
\end{equation}
\end{proposition}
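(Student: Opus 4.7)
The plan is to compute $(\nabla'_X\phi)Y = \nabla'_X(\phi Y) - \phi\nabla'_X Y$ directly, using the previous proposition to express $\nabla'$ in terms of $\nabla$ and then invoking the Sasakian identity $(\nabla_X\phi)Y = g(X,Y)\xi - \eta(Y)X$. Rearranging equation (\ref{e:txy}), I write $\nabla'_X Z = \nabla_X Z - c(\eta'(X)\phi Z + \eta'(Z)\phi X)$ with $c = (\beta^2-\alpha)/(\alpha\beta)$, and apply this both with $Z = Y$ and with $Z = \phi Y$.

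After substitution the term $\nabla_X(\phi Y) - \phi\nabla_X Y$ produces exactly $(\nabla_X\phi)Y$, which by the Sasakian assumption equals $g(X,Y)\xi - \eta(Y)X$. The cross terms involving $c\eta'(X)\phi^2 Y$ cancel. What remains of the extra contribution is $-c\eta'(\phi Y)\phi X + c\eta'(Y)\phi^2 X$. The first piece vanishes since $\eta\circ\phi = 0$, and for the second I expand $\phi^2 X = -X + \eta(X)\xi$.

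Collecting everything I obtain
\begin{equation*}
(\nabla'_X\phi)Y = g(X,Y)\xi + c\beta\,\eta(X)\eta(Y)\xi - (1 + c\beta)\eta(Y)X.
\end{equation*}
Since $c\beta = (\beta^2-\alpha)/\alpha$, one has $1 + c\beta = \beta^2/\alpha$, so the coefficient of $\xi$ reorganises as $\tfrac{1}{\alpha}\bigl(\alpha g(X,Y) + (\beta^2-\alpha)\eta(X)\eta(Y)\bigr) = \tfrac{1}{\alpha}g'(X,Y)$. Converting back via $\xi = \beta\xi'$ and $\eta = \eta'/\beta$ yields $\tfrac{1}{\alpha}g'(X,Y)\xi = \tfrac{\beta}{\alpha}g'(X,Y)\xi'$ and $\tfrac{\beta^2}{\alpha}\eta(Y)X = \tfrac{\beta}{\alpha}\eta'(Y)X$, which gives (\ref{e:nabpfi}).

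No step is genuinely difficult; the only obstacle is bookkeeping. The place where it would be easiest to make a mistake is tracking which argument of $\nabla'_X Z$ receives the $\phi Z$ term versus the $\phi X$ term when substituting $Z = \phi Y$, and then remembering that $\eta'(\phi Y) = 0$ is what kills an otherwise awkward $\phi X$ summand. Once those details are handled, the conversion from unprimed to primed quantities is forced by the definitions of $\xi'$, $\eta'$, $g'$.
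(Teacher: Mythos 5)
Your proof is correct: the computation of $(\nabla'_X\phi)Y$ via the difference tensor from the previous proposition, the cancellation of the $c\eta'(X)\phi^2Y$ terms, the vanishing of $\eta'(\phi Y)$, and the final conversion $1+c\beta=\beta^2/\alpha$ all check out, and this is essentially the same argument as the paper's (which writes the same cancellation as $(T_X\phi)Y=T_X\phi Y-\phi T_XY$ and merely converts to primed quantities one step earlier). No issues.
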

\begin{proof}
 
We have
\begin{align}
\label{e:nab1}
& (\nabla_X\phi )Y = (\nabla'_X\phi)Y +(T_X\phi)Y= (\nabla'_X\phi)Y + \\
& \nonumber\qquad T_X\phi Y - \phi T_XY = (\nabla'_X\phi)Y + 
c(\eta'(X)\phi^2Y - \\ 
& \nonumber\qquad \eta'(X)\phi^2Y-\eta'(Y)\phi^2X) = (\nabla'_X\phi)Y + 
c(\eta'(Y)X-\eta'(X)\eta'(Y)\xi'),
\end{align}
As $\mathcal{M}$ is Sasakian 
\begin{align}
\label{e:nab2}
& (\nabla_X\phi)Y=g(X,Y)\xi -\eta(Y)X = \frac{\beta}{\alpha}g'(X,Y)\xi' - \\
&\nonumber \qquad c\eta'(X)\eta'(Y)\xi' - 
  \frac{1}{\beta}\eta'(Y)X,
\end{align}
comparing (\ref{e:nab1}), (\ref{e:nab2}), we obtain (\ref{e:nabpfi}).
\end{proof}

\begin{corollary}
Image of Sasakian manifold by $\mathcal D_{\alpha,\beta}$-homothety with parameters 
$\alpha$, $\beta > 0$ is $\frac{\beta}{\alpha}$-Sasakian manifold. 

\end{corollary}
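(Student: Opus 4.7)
The corollary is essentially an immediate restatement of the preceding Proposition in the language of $\alpha$-Sasakian structures. The plan is first to confirm that the deformed quadruple is still an almost contact metric structure, and then to read off the $\frac{\beta}{\alpha}$-Sasakian condition from formula (\ref{e:nabpfi}).

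First, I would verify that $(\phi'=\phi,\,\xi'=\xi/\beta,\,\eta'=\beta\eta,\,g'=\alpha g+(\beta^2-\alpha)\eta\otimes\eta)$ is an almost contact metric structure on $\mathcal M$. The identity $\phi^2X=-X+\eta'(X)\xi'$ is immediate since $\eta'(X)\xi'=\beta\eta(X)\cdot\tfrac{1}{\beta}\xi=\eta(X)\xi$, and $\eta'(\xi')=\eta(\xi)=1$. For metric compatibility, observe $\eta(\phi X)=0$, so the $\eta\otimes\eta$ correction in $g'$ drops out when evaluated on $(\phi X,\phi Y)$; consequently
\begin{equation*}
g'(\phi X,\phi Y)=\alpha g(\phi X,\phi Y)=\alpha\bigl(g(X,Y)-\eta(X)\eta(Y)\bigr),
\end{equation*}
and a direct expansion of $g'(X,Y)-\eta'(X)\eta'(Y)$ yields the same expression.

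Second, the preceding Proposition gives
\begin{equation*}
(\nabla'_X\phi)Y=\frac{\beta}{\alpha}\bigl(g'(X,Y)\xi'-\eta'(Y)X\bigr).
\end{equation*}
Setting $\gamma=\beta/\alpha$, which is a strictly positive (hence nonzero) real constant, this reads $(\nabla'_X\phi)Y=\gamma\bigl(g'(X,Y)\xi'-\eta'(Y)X\bigr)$, which is exactly the defining equation of a $\gamma$-Sasakian manifold in the sense introduced in the preliminaries. Therefore the image structure is $\frac{\beta}{\alpha}$-Sasakian.

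There is no genuine obstacle here: all nontrivial computation is already contained in the preceding two propositions. The only point requiring care is terminological, namely the convention (clarified in the introduction) that $\alpha$-Sasakian is to be read in the wide sense, so that the single equation on $\nabla'\phi$ suffices without a separate normality check; no further verification is needed.
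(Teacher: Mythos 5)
Your proposal is correct and follows essentially the same route as the paper: the corollary is stated there without a separate proof, being read off directly from the preceding proposition $(\nabla'_X\phi)Y=\frac{\beta}{\alpha}\bigl(g'(X,Y)\xi'-\eta'(Y)X\bigr)$ together with the paper's (wide) definition of an $\alpha$-Sasakian manifold via this single equation on $\nabla\phi$. Your additional verification that the deformed quadruple is still an almost contact metric structure is a sensible extra check and is carried out correctly.
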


To find relation between corresponding curvature operators we use following 
general formula
\begin{equation}
\label{e:r:rp}
R_{XY}Z=R'_{XY} +(\nabla'_XT)_YZ-(\nabla'_YT)_XZ+[T_X,T_Y]Z,
\end{equation}
where $[T_X,T_Y]Z=T_XT_YZ-T_YT_XZ$.

For covariant derivative $\nabla'_XT$ on base of above Propositions we find
\begin{align}
\label{e:nabptxy}
& (\nabla'_XT)_YZ =c ((\nabla'_X\eta')(Y)\phi Z + (\nabla'_X\eta')(Z)\phi Y +\\
& \nonumber\qquad \eta'(Y)(\nabla'_X\phi)Z+\eta'(Z)(\nabla'_X\phi)Y) =  \\
 & \nonumber\qquad \frac{c\beta}{\alpha}(\Phi'(X,Y)\phi Z+ \Phi'(X,Z)\phi Y )+ \\ 
& \nonumber\qquad \frac{c\beta}{\alpha}(g'(X,Z)\eta'(Y)+g'(X,Y)\eta'(Z))\xi' - \\ 
& \nonumber\qquad \frac{c\beta}{\alpha} 2\eta'(Y)\eta'(Z)X),
\end{align}
we have used $(\nabla_X\eta')(Y)=\frac{\beta}{\alpha}\Phi'(X,Y)$. 
For $T_XT_YZ$, we obtain
\begin{align}
\label{e:txtyz}
& T_XT_YZ = c\eta'(X)\phi T_YZ = c^2(\eta'(X)\eta'(Y)\phi Z + \\
& \nonumber\qquad\eta'(X)\eta'(Z)\phi Y)
\end{align}
as $\eta'(T_XY)=0$, for every $X$, $Y$. In view of 
(\ref{e:r:rp}),(\ref{e:nabptxy}),(\ref{e:txtyz}), we can establish 
following result.
\begin{proposition}
Let $g'$ be a $\mathcal D_{\alpha,\beta}$-homothety of Sasakian metric. Then 
Riemann curvature operator $R$ and the curvature operator $R'$ of the
deformed metric are related by following formula
\begin{align}
& R_{XY}Z = R'_{XY}Z + \\
& \nonumber\qquad  \frac{c\beta}{\alpha}( \Phi'(X,Z)\phi Y -\Phi'(Y,Z)\phi X+2\Phi'(X,Y)\phi Z) + \\
& \nonumber\qquad  \frac{c\beta}{\alpha}(g'(X,Z)\eta'(Y)-g'(Y,Z)\eta'(X))\xi' - \\
& \nonumber \qquad \frac{c\beta}{\alpha}\eta'(Z)(\eta'(Y)X-\eta'(X)Y)) - \\
& \nonumber \qquad  \frac{c^3\beta}{\alpha}\eta'(Z)(\eta'(Y)\phi X-\eta'(X)\phi Y), 
\end{align} 
\end{proposition}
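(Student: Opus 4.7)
The plan is to apply the standard identity relating the Riemann curvature operators of two torsion-free connections that differ by a tensor field. Setting $T_XY = \nabla_XY - \nabla'_XY$, one has the identity already displayed as (\ref{e:r:rp}),
\begin{equation*}
R_{XY}Z = R'_{XY}Z + (\nabla'_X T)_Y Z - (\nabla'_Y T)_X Z + [T_X, T_Y]Z,
\end{equation*}
and from the preceding proposition I know explicitly that $T_XY = c(\eta'(X)\phi Y + \eta'(Y)\phi X)$ with $c = (\beta^2-\alpha)/(\alpha\beta)$. The proof then reduces to computing the two nontrivial right-hand terms and collecting them.

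First I would compute $(\nabla'_X T)_Y Z$ by applying the Leibniz rule to the tensorial form of $T$, which requires the two ingredients $(\nabla'_X\phi)Z = (\beta/\alpha)(g'(X,Z)\xi' - \eta'(Z)X)$ from (\ref{e:nabpfi}) and $(\nabla'_X\eta')(Y) = (\beta/\alpha)\Phi'(X,Y)$, the latter following at once from $\nabla'_X\xi' = -(\beta/\alpha)\phi X$ on a $(\beta/\alpha)$-Sasakian manifold. Substituting these into the Leibniz expansion produces exactly (\ref{e:nabptxy}).

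Second I would compute the commutator $[T_X, T_Y]Z = T_XT_YZ - T_YT_XZ$. The crucial simplification is $\eta'(\phi W)=0$, which gives $\eta'(T_YZ)=0$ and hence $T_XT_YZ = c\,\eta'(X)\,\phi(T_YZ)$; expanding via $\phi^2W = -W + \eta'(W)\xi'$ then yields the form displayed in (\ref{e:txtyz}), from which the commutator follows by subtraction.

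Third I would antisymmetrize $(\nabla'_X T)_Y Z$ in $X\leftrightarrow Y$ and add the commutator. Symmetric pieces — for example the $g'(X,Y)\eta'(Z)\xi'$ contribution — drop out, while the antisymmetric pieces organize themselves into the four families of terms listed in the statement: the $\phi$-valued $\Phi'$-terms (with the factor of $2$ in front of $\Phi'(X,Y)\phi Z$ arising from doubling), the $\xi'$-valued $g'$-combination, the $X,Y$ combination, and the $\phi X,\phi Y$ combination. The main obstacle is purely bookkeeping: tracking the powers of $c$ and the relative signs across the three contributions, and verifying the symmetric–antisymmetric cancellations carefully. There is no conceptual difficulty since every ingredient — the difference tensor $T$, the explicit covariant derivatives of $\phi$ and $\eta'$, and the $\phi^2$-identity — has already been established.
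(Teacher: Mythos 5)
Your proposal is correct and follows essentially the same route as the paper: the author likewise starts from the difference-tensor identity (\ref{e:r:rp}), computes $(\nabla'_XT)_YZ$ by the Leibniz rule using $(\nabla'_X\phi)Y$ and $(\nabla'_X\eta')(Y)=\frac{\beta}{\alpha}\Phi'(X,Y)$ to obtain (\ref{e:nabptxy}), computes $T_XT_YZ$ via $\eta'(T_YZ)=0$ to obtain (\ref{e:txtyz}), and then antisymmetrizes and collects terms. There is no substantive difference between your plan and the paper's argument.
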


We have $Ric(Y,Z) = Tr \{ X\mapsto R_{XY}Z\}$, so as corollary from the above proposition we obtain
\begin{corollary}
Ricci tensors of Sasakian manifold and its $\frac{\beta}{\alpha}$-Sasakian deformation  are related by 
\begin{equation}
Ric(Y,Z)=Ric'(Y,Z)+2\frac{c\beta}{\alpha}(g'(Y,Z)-(n+1)\eta'(Y)\eta'(Z)).
\end{equation}
\end{corollary}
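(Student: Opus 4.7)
The plan is to take the pointwise trace of the curvature identity stated in the preceding Proposition, applied to the operator $X \mapsto R_{XY}Z$, and match terms on both sides. By definition $Ric(Y,Z) = \mathrm{Tr}\{X \mapsto R_{XY}Z\}$ and $Ric'(Y,Z) = \mathrm{Tr}\{X \mapsto R'_{XY}Z\}$, so the content of the Corollary amounts to evaluating the traces of the six correction operators appearing on the right-hand side of the curvature formula and summing them.

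The first step is to fix an orthonormal frame adapted to the deformed structure, namely $(\xi',E_1,\ldots,E_{2n})$ with $E_i \in \ker\eta'$ and $g'(E_i,E_j)=\delta_{ij}$, $g'(\xi',\xi')=1$; note that $\xi'=\frac{1}{\beta}\xi$ is indeed $g'$-unit. I would first record the compatibility of $\phi$ with $g'$, the identity $g'(\phi X,\phi Y)=g'(X,Y)-\eta'(X)\eta'(Y)$, as well as $\Phi'(X,Y)=g'(X,\phi Y)=-g'(\phi X,Y)$, $\mathrm{Tr}(\phi)=0$, and $\phi\xi'=0$, $\eta'\circ\phi=0$. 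These are direct consequences of $(\phi,\xi',\eta',g')$ being an almost contact metric structure, which is itself immediate from the definitions of the $\mathcal D_{\alpha,\beta}$-deformation.

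The second step is the bookkeeping: trace each correction term in turn. The three $\Phi'$-terms reduce via $\sum_i \Phi'(E_i,Z)g'(E_i,\phi Y)=g'(\phi Z,\phi Y)=g'(Y,Z)-\eta'(Y)\eta'(Z)$ for the $\Phi'(X,Z)\phi Y$ and $2\Phi'(X,Y)\phi Z$ contributions, while the $-\Phi'(Y,Z)\phi X$ term traces to zero thanks to $\mathrm{Tr}(\phi)=0$. The $\xi'$-valued term $\frac{c\beta}{\alpha}(g'(X,Z)\eta'(Y)-g'(Y,Z)\eta'(X))\xi'$ contributes only from the $X=\xi'$ component of the frame, giving $\frac{c\beta}{\alpha}(\eta'(Y)\eta'(Z)-g'(Y,Z))$. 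The term $-\frac{c\beta}{\alpha}\eta'(Z)(\eta'(Y)X-\eta'(X)Y)$ uses $\mathrm{Tr}(\mathrm{Id})=2n+1$ and $\mathrm{Tr}(X\mapsto \eta'(X)Y)=\eta'(Y)$, yielding $-\frac{2nc\beta}{\alpha}\eta'(Y)\eta'(Z)$. The final cubic-in-$c$ term has trace zero because both $\mathrm{Tr}(\phi)=0$ and $\eta'(\phi Y)=0$.

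The third step is arithmetic: adding the four nonzero contributions, the $g'(Y,Z)$ coefficient is $(1+2-1)\frac{c\beta}{\alpha}=\frac{2c\beta}{\alpha}$ and the $\eta'(Y)\eta'(Z)$ coefficient collects to $-\frac{2(n+1)c\beta}{\alpha}$, producing the asserted identity. There is no genuine obstacle here; the only real risk is a sign error in reading off contributions from the $\xi'$-component, so I would double-check that step by verifying the specialization $Y=Z=\xi'$, which on the left gives $Ric(\xi',\xi')=\frac{2n}{\beta^2}$ and on the right should reduce to the known value $Ric'(\xi',\xi')=2n\frac{\beta}{\alpha}$ for a $\frac{\beta}{\alpha}$-Sasakian manifold plus the correction $\frac{2c\beta}{\alpha}(1-(n+1))$, providing a consistency check before writing the formula down.
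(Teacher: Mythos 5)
Your plan---trace the curvature identity of the preceding Proposition over a $g'$-orthonormal frame $(\xi',E_1,\dots,E_{2n})$---is exactly what the paper does (its ``proof'' is the single remark that $Ric(Y,Z)=\mathrm{Tr}\{X\mapsto R_{XY}Z\}$), and your term-by-term bookkeeping is executed correctly: the two surviving $\Phi'$-terms each contribute multiples of $g'(\phi Y,\phi Z)=g'(Y,Z)-\eta'(Y)\eta'(Z)$, the $\xi'$-valued term contributes $\frac{c\beta}{\alpha}(\eta'(Y)\eta'(Z)-g'(Y,Z))$, the identity/$\eta'$ term contributes $-\frac{2nc\beta}{\alpha}\eta'(Y)\eta'(Z)$, and the sums $(1+2-1)$ and $(-1-2+1-2n)$ reproduce the stated coefficients. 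As a derivation of the Corollary from the Proposition, nothing is missing.

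However, your proposed consistency check is where the real issue lies, and you have its ingredients wrong. For a $\frac{\beta}{\alpha}$-Sasakian structure one has $R'_{XY}\xi'=\frac{\beta^2}{\alpha^2}(\eta'(Y)X-\eta'(X)Y)$ and hence $Ric'(\xi',\xi')=2n\frac{\beta^2}{\alpha^2}$, not $2n\frac{\beta}{\alpha}$ as you wrote. With the correct value the check \emph{fails}: the left side is $Ric(\xi',\xi')=\frac{2n}{\beta^2}$, while the right side is $2n\frac{\beta^2}{\alpha^2}-\frac{2nc\beta}{\alpha}=\frac{2n}{\alpha}$, and these agree only when $\alpha=\beta^2$ (i.e.\ $c=0$). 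The discrepancy is not in your trace but upstream, in the Proposition you are tracing: there $\phi T_YZ$ should produce $\phi^2$-terms, so $[T_X,T_Y]Z=c^2\eta'(Z)(\eta'(Y)X-\eta'(X)Y)$ rather than a $\phi$-valued expression, and the coefficient of $\eta'(Z)(\eta'(Y)X-\eta'(X)Y)$ in the curvature relation should be $c^2-\frac{2c\beta}{\alpha}$ instead of $-\frac{c\beta}{\alpha}$. Tracing the corrected identity gives $Ric(Y,Z)=Ric'(Y,Z)+\frac{2c\beta}{\alpha}g'(Y,Z)+\bigl(2nc^2-(4n+2)\frac{c\beta}{\alpha}\bigr)\eta'(Y)\eta'(Z)$, which does pass the $(\xi',\xi')$ test. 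So: carry out your own sanity check with the correct value of $Ric'(\xi',\xi')$ before signing off---it is precisely the safeguard that would have caught this.
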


As we know already $\mathcal D_{\alpha,\beta}$-homothety of Sasakian manifold 
is an $\frac{\beta}{\alpha}$-Sasakian manifold. Providing two consecutive 
homotheties with parameters $(\alpha_i,\beta_i)$, $i=1,2$, we obtain that 
resulting manifold is $\frac{\beta_1\beta_2}{\alpha_1\alpha_2}$-Sasakian. 
Therefore as conclusion we obtain general statement that 
$\mathcal D_{\alpha,\beta}$-homothety with parameters $\alpha_1$, $\beta_1$ of 
some $\alpha$-Sasakian manifold is 
$(\frac{\beta_1}{\alpha_1}\alpha)$-Sasakian.

\subsection{$\mathcal D_{\alpha,\beta}$-homotheties of $\alpha$-Sasakian twisted $\eta$-Ricci soliton}
In this part of the paper we will prove important result that 
equation which defines twisted $\eta$-Ricci soliton is on 
$\alpha$-Sasakian manifolds, invariant under $\mathcal D_{\alpha,\beta}$-homotheties. 

\begin{theorem}
Let assume $\alpha$-Sasakian manifold $(\mathcal{M},\phi,\xi,\eta,g)$ 
is twisted $\eta$-Ricci soliton
\begin{align}
\label{e:sas:esol}
& Ric +\frac{1}{2}(\mathcal{L}_Xg) = \lambda g+2C_1 \alpha_X\odot\eta +
C_2\eta\otimes\eta, \\
& (\mathcal{L}_X\eta)(Y) = \alpha_X(Y),\quad
\eta(X)=0, 
\end{align}
then its image by $\mathcal D_{\alpha,\beta}$-homothety $(\mathcal{M},\phi,\xi',\eta',g')$,
 is also a twisted $\eta$-Ricci soliton 
soliton, 
\begin{align}
& Ric' +\frac{1}{2}\mathcal{L}_{X'}g' = \lambda'g' + 
2C_1'\alpha'_{X'}\odot \eta' +C_2'\eta'\otimes\eta', \\
& (\mathcal{L}_{X'}\eta')(Y) = \alpha'_{X'}, \quad 
 \quad \eta'(X')=0,
\end{align} 
\end{theorem}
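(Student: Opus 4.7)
The plan is to seek $X'$ in the one-parameter family $X'=aX$ with $a>0$ to be determined; since $\eta'=\beta\eta$ and $\eta(X)=0$, such $X'$ automatically satisfies $\eta'(X')=0$. For this ansatz $\alpha'_{X'}=\mathcal{L}_{X'}\eta'=a\beta\,\alpha_X$, and applying the Leibniz rule for $\mathcal{L}$ to $g'=\alpha g+(\beta^2-\alpha)\eta\otimes\eta$ yields
\[
\mathcal{L}_{X'}g'=a\alpha\,\mathcal{L}_Xg+2a(\beta^2-\alpha)\,\alpha_X\odot\eta.
\]
Substituting $\tfrac12\mathcal{L}_Xg=\lambda g-Ric+2C_1\alpha_X\odot\eta+C_2\eta\otimes\eta$ from the hypothesis gives $\tfrac12\mathcal{L}_{X'}g'$ as an expression in which $Ric$ appears with coefficient $-a\alpha$ while the remaining terms are scalar multiples of $g$, $\alpha_X\odot\eta$ and $\eta\otimes\eta$.

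Next I would add $Ric'$ to both sides and invoke the transformation of the Ricci tensor under a $\mathcal{D}_{\alpha,\beta}$-homothety --- the corollary proved just before, extended from the Sasakian to the general $\alpha$-Sasakian setting. This transformation has the form $Ric'-Ric=Ag'+B\,\eta'\otimes\eta'$, with constants $A,B$ depending only on $\alpha,\beta,n$ and on the $\alpha$-Sasakian structure constant. After substitution the residual $Ric$ term carries coefficient $(1-a\alpha)$, and the requirement that it vanish \emph{forces} the scaling $a=1/\alpha$, i.e.\ $X'=X/\alpha$.

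With $a=1/\alpha$ fixed, one uses the inverse relations
\[
g=\tfrac{1}{\alpha}g'+\bigl(\tfrac{1}{\beta^2}-\tfrac{1}{\alpha}\bigr)\eta'\otimes\eta',\qquad \eta\otimes\eta=\tfrac{1}{\beta^2}\eta'\otimes\eta',\qquad \alpha_X\odot\eta=\tfrac{\alpha}{\beta^2}\,\alpha'_{X'}\odot\eta',
\]
to re-express the right-hand side purely in the primed tensors. Since $g'$, $\alpha'_{X'}\odot\eta'$ and $\eta'\otimes\eta'$ are pointwise linearly independent, matching coefficients produces unique expressions for $\lambda'$, $C_1'$ and $C_2'$ as affine functions of $\lambda$, $C_1$, $C_2$ with coefficients built from $\alpha$, $\beta$ and $n$. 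A pleasant by-product of the calculation is the identity $C_1'-\tfrac12=\tfrac{\alpha}{\beta^2}(C_1-\tfrac12)$, so that the sign of $C_1-\tfrac12$ is preserved --- this is the origin of the three $\mathcal{D}_{\alpha,\beta}$-invariant classes advertised in the introduction.

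The principal obstacle is the forced scaling of the soliton vector field: nothing in the statement predicts that $X'$ should differ from $X$, but the $Ric$-contributions refuse to cancel unless $a=1/\alpha$. A secondary point of care is to verify that the Ricci-transformation corollary admits the stated extension from a Sasakian to an arbitrary $\alpha$-Sasakian base; this is clean conceptually because any $\alpha$-Sasakian structure is itself a $\mathcal{D}$-homothety of some Sasakian one, so the composite transformation is once again a $\mathcal{D}$-homothety and the corollary applies verbatim to the composition.
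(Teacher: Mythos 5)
Your proposal is correct and follows essentially the same route as the paper: transform each term of the soliton equation under the $\mathcal D_{\alpha,\beta}$-homothety using the Ricci-tensor corollary and the inverse relations for $g$, $\eta\otimes\eta$ and $\alpha_X$, rescale the soliton field to $X'=\tfrac{1}{\alpha}X$, and match coefficients (the paper simply asserts the rescaling where you derive it from the cancellation of the residual $Ric$ term, and it likewise reduces to the Sasakian case ``without loss of generality'' where you spell out the composition-of-homotheties justification). The resulting constants, including the invariance relation $C_1'-\tfrac12=\tfrac{\alpha}{\beta^2}\bigl(C_1-\tfrac12\bigr)$, agree with the paper's.
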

\begin{proof}
The proof is almost evident. Without loosing generality we may assume that 
manifold is Sasakian. We only need to find how each term in equation 
(\ref{e:sas:esol}) changes under $\mathcal D_{\alpha,\beta}$-homothety. Therefore
\begin{align}
& Ric = Ric'+2\frac{c\beta}{\alpha}g'-
  2(n+1)\frac{c\beta}{\alpha}\eta'\otimes \eta', \\
& \mathcal{L}_Xg= \frac{1}{\alpha}\mathcal{L}_Xg' + 
2(\frac{1}{\beta^2}-\frac{1}{\alpha})(\mathcal{L}_X\eta')\odot\eta' = \\
& \nonumber\qquad\mathcal{L}_{X'}g'+
2(\frac{\alpha}{\beta^2}-1)(\mathcal{L}_{X'}\eta')\odot\eta', \\
& \alpha_X = \mathcal{L}_X\eta = 
\frac{\alpha}{\beta}\mathcal{L}_{X'}\eta' = 
\frac{\alpha}{\beta}\alpha'_{X'}, \\
& g = \frac{1}{\alpha}g'+(\frac{1}{\beta^2}-\frac{1}{\alpha})\eta'\otimes\eta', \quad
\eta\otimes\eta = \frac{1}{\beta^2}\eta'\otimes\eta',
\end{align}
however we need to rescale vector field $X$ by $\frac{1}{\alpha}$,
$X \mapsto X'=\frac{1}{\alpha}X$, 
after regrouping we obtain that $\frac{\beta}{\alpha}$-Sasakian manifold 
is twisted $\eta$-Ricci soliton with constants $\lambda'$, 
$C_1'$, $C_2'$, given by 
\begin{align*}
& \lambda' = \frac{1}{\alpha}(\lambda-2c\beta), \quad 
C_1' = \frac{\alpha}{\beta^2}(C_1-\frac{1}{2})+\frac{1}{2}, \\ 
& C_2' = \lambda(\frac{1}{\beta^2}-\frac{1}{\alpha}) + 
\frac{C_2}{\beta^2}+2(n+1)c\frac{\beta}{\alpha}
\end{align*}
where $c=\frac{\beta^2-\alpha}{\alpha\beta}$.
\end{proof}

On the base of above formulas we can answer question whether or not it is possible to remove the 
twist from the equation. That means does exist $D_{\alpha,\beta}$-homothety so $C_1=0$? 
From above formulas we see that necessary and sufficient condition is that the source structure 
satisfies $C_1 < \frac{1}{2}$.  Behavior under deformations determines three classes of 
$\alpha$-Sasakian twisted $\eta$-Ricci solitons determined by value of twist coefficient $C_1$. 
The first class are those manifolds where $C_1 < \frac{1}{2}$, second say singular class are manifolds 
where $C_1=\frac{1}{2}$, and the third class are manifolds where $C_1 > \frac{1}{2}$. Later on 
studying lifts of Ricci-K\"ahler solitons as corollary we obtain that class $C_1 < \frac{1}{2}$ is 
always nonempty. Exactly lift of Ricci-K\"ahler soliton belongs to this class. So basically there is 
problem to solve about the other two classes: do exist $\alpha$-Sasakian twisted 
$\eta$-Ricci solitons with $C_1 \geqslant \frac{1}{2}$?  Note that it is the case where it is not possible 
to remove twist by $\mathcal D_{\alpha,\beta}$-homothety.

\section{Lifts of Killing vector fields, inifinitesimal biholomorhpisms and
automorhpisms}

In this we are interested in lifts of vector fields which satisfy some 
additional conditions. We just want to ask questions what is 
a lift of complex structure infinitesimal automorphism and 
similarly what is a lift of Killing vector field. 
\begin{proposition}
\label{p:inaut}
Let $V$ be an infinitesimal automorphism of complex structure on K\"ahler 
base. Then its lift $V^L$ satisfies 
\begin{equation}
(\mathcal{L}_{V^L}\phi)X^L = 2g^L(V^L,X^L)\xi, 
\quad (\mathcal{L}_{V^L}\phi)\xi = 0.
\end{equation}
\end{proposition}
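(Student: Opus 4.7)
The plan is to compute both Lie derivatives directly from the definition $(\mathcal{L}_{V^L}\phi)Z = [V^L,\phi Z]-\phi[V^L,Z]$, reducing everything to brackets and then applying the commutator formula (\ref{e:bra:lif}) together with the hypothesis $\mathcal{L}_V J = 0$.

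First I would evaluate $(\mathcal{L}_{V^L}\phi)X^L$. Using $\phi X^L=(JX)^L$ and the commutator formula for lifts from equation (\ref{e:bra:lif}), I get
\begin{align*}
[V^L,\phi X^L] &= [V^L,(JX)^L] = [V,JX]^L - 2\Phi(V^L,(JX)^L)\xi,\\
\phi[V^L,X^L] &= \phi\bigl([V,X]^L - 2\Phi(V^L,X^L)\xi\bigr) = (J[V,X])^L,
\end{align*}
where in the second line I used $\phi\xi=0$. Subtracting, the hypothesis $\mathcal{L}_VJ=0$, i.e. $[V,JX]=J[V,X]$, kills the two lift terms, leaving the $\xi$-contribution $-2\Phi(V^L,(JX)^L)\xi$. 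I then rewrite $\Phi(V^L,(JX)^L)=g^L(V^L,\phi(JX)^L)=g^L(V^L,(J^2X)^L)=-g^L(V^L,X^L)$, which yields exactly $2g^L(V^L,X^L)\xi$.

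For $(\mathcal{L}_{V^L}\phi)\xi$, since $\phi\xi=0$, it suffices to show $\phi[V^L,\xi]=0$. The paper already remarks that $[\xi,Z^L]=0$ for any lift, and since $V^L$ is itself (by definition) a section of the form $-\pi_2^*\tau(V)\xi+V$ built from objects pulled back from the base, the same argument — $\xi$-independence of pullback functions plus $[\partial_t,V]=0$ in product coordinates — gives $[V^L,\xi]=0$, hence $(\mathcal{L}_{V^L}\phi)\xi=0$.

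There is no substantial obstacle here; the only place requiring care is the bookkeeping of the $\xi$-term in $[V^L,(JX)^L]$, since it is exactly this term that does not cancel and which (via $\Phi$) produces the factor $2g^L(V^L,X^L)\xi$ on the right. Everything else is a direct application of (\ref{e:bra:lif}), the identity $\phi X^L=(JX)^L$, and the integrability hypothesis $\mathcal{L}_VJ=0$.
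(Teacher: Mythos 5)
Your proof is correct and follows essentially the same route as the paper: apply the bracket formula (\ref{e:bra:lif}) to $[V^L,(JX)^L]$ and to $\phi[V^L,X^L]$, cancel the lift terms via $\mathcal{L}_VJ=0$, and convert the surviving term $-2\Phi(V^L,\phi X^L)\xi$ into $2g^L(V^L,X^L)\xi$. Your explicit verification of $(\mathcal{L}_{V^L}\phi)\xi=0$ via $[V^L,\xi]=0$ is a small welcome addition that the paper leaves implicit.
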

\begin{proof}
We have 
\begin{equation}
\label{e:v:phi}
[V^L,\phi X^L] = [V, JX]^L -2 \Phi(V^L,\phi X^L)\xi, 
\end{equation}
\begin{equation}
\label{e:phi:v}
\phi[V^L,X^L] = \phi [V,X]^L = (J[V,X])^L,
\end{equation}
as $\Phi(V^L,\phi X^L) = -g^L(V^L,X^L)$, by (\ref{e:v:phi}), 
 (\ref{e:phi:v}) 
\begin{align*}
& (\mathcal{L}_{V^L}\phi)X^L = [V^L,\phi X^L] -\phi[V^L,X^L]= 
((\mathcal{L}_VJ)X)^L+
\\ 
& \nonumber \qquad 2g^L(V^L,X^L)\xi,
\end{align*}  
and result follows by assumption that $\mathcal{L}_VJ=0$.
\end{proof}

In similar way we can prove following statement considering lift 
of Killing vector fields from K\"ahler base.
\begin{proposition}
\label{p:kill}
Let $V$ be a Killing vector field on K\"ahler base. Then its lift 
satisfies
\begin{equation}
(\mathcal{L}_{V^L}g^L)(X^L,Y^L) = 0, \quad
(\mathcal{L}_{V^L}g^L)(\xi, X^L) = 2\Phi(V^L,X^L).
\end{equation}
\end{proposition}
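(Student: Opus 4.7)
The plan is to expand both Lie derivatives with the standard identity
\begin{equation*}
(\mathcal{L}_{V^L}g^L)(A,B) = V^L\bigl(g^L(A,B)\bigr) - g^L([V^L,A],B) - g^L(A,[V^L,B]),
\end{equation*}
then substitute the commutator formula (\ref{e:bra:lif}) for lifts from the base, and finally use the basic identifications $g^L(X^L,Y^L) = \overline{g(X,Y)}$, $\eta(Y^L)=0$, and $V^L\bar f = \overline{Vf}$ that are built into the definition of the lift.

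For the first identity, applied to $A=X^L$, $B=Y^L$, I would compute each term of the displayed formula as a pullback from the base. The Leibniz term becomes $V^L\overline{g(X,Y)} = \overline{V g(X,Y)}$. For the bracket terms, (\ref{e:bra:lif}) gives $[V^L,X^L] = [V,X]^L - 2\omega(V,X)\xi$; pairing this with $Y^L$ annihilates the $\xi$-correction because $g^L(\xi,Y^L) = \eta(\xi)\eta(Y^L) + \pi_2^*g(\xi,Y^L) = 0$ (the first summand vanishes since $\eta(Y^L)=0$ by construction, the second since $\pi_{2*}\xi = 0$). The analogous calculation for $[V^L,Y^L]$ leaves exactly $\overline{(\mathcal{L}_V g)(X,Y)}$, which is zero by the hypothesis that $V$ is Killing on the base.

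For the second identity, applied to $A=\xi$, $B=X^L$, the first two terms of the Lie derivative drop out: $g^L(\xi,X^L) = 0$ as just noted, and $[V^L,\xi] = 0$ because $\xi = \partial_t$ commutes with everything pulled back from $\mathcal N$ (both $V$ and the scalar $\overline{\tau(V)}$ in $V^L = V - \overline{\tau(V)}\xi$ are $t$-independent). What remains is $-g^L(\xi,[V^L,X^L])$. Using (\ref{e:bra:lif}) once more, this equals $-g^L(\xi,[V,X]^L) + 2\omega(V,X)\,g^L(\xi,\xi)$; the first summand is zero for the same reason as before, and $g^L(\xi,\xi) = \eta(\xi)^2 = 1$, so the expression collapses to $2\omega(V,X) = 2\Phi(V^L,X^L)$, as desired.

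I do not expect a substantive obstacle: the argument is pure bookkeeping built on the commutator identity (\ref{e:bra:lif}) and the product form of $g^L$. The only structural point worth flagging is the parallel with Proposition \ref{p:inaut}: there the anomaly $2g^L(V^L,X^L)\xi$ recorded the failure of the lift to preserve $\phi$ in the Reeb direction, whereas here the anomaly $2\Phi(V^L,X^L)$ comes from exactly the same vertical term in $[V^L,X^L]$, now showing up in the Reeb-row of $\mathcal{L}_{V^L}g^L$ instead of being killed.
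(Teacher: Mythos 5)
Your computation is correct, and it follows exactly the route the paper intends: the paper omits the proof, remarking only that it goes ``in similar way'' to Proposition \ref{p:inaut}, i.e.\ by expanding the Lie derivative with the commutator formula (\ref{e:bra:lif}) and using $g^L(\xi,Y^L)=0$, $g^L(\xi,\xi)=1$, and $[\xi,V^L]=0$ --- precisely your bookkeeping. Nothing to add.
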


\begin{proposition}
Let $V$ be an inifinitesimal automorphism of K\"ahler form. Its lift 
satisfies
\begin{equation}
\label{e:lie:vl:phi}
\mathcal{L}_{V^L}\Phi = 0,
\end{equation}
in particular one-form $\mathcal{L}_{V^L}\eta$ is closed.
\end{proposition}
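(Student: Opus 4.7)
The plan is to reduce everything to the identity $\Phi = \pi_2^*\omega$ that was established during the proof of normality. Once we express the fundamental form of the lift as a pullback from the Kähler base, the assumed invariance $\mathcal{L}_V\omega = 0$ should transfer directly to the lifted vector field by a naturality argument.

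First I would check that $V^L$ is $\pi_2$-related to $V$, i.e.\ $\pi_{2*}V^L = V$. This is immediate from the definition $V^L = -\overline{\tau(V)}\,\xi + V$: the Reeb direction is killed by $\pi_{2*}$ since $\pi_{2*}\xi = 0$, while the component $V$ (viewed as tangent to the second factor of $\mathcal{M} = \mathbb{R}\times\mathcal{N}$) projects to $V$ on the base. Once this $\pi_2$-relatedness is in hand, the standard naturality property of the Lie derivative with respect to a map relating two vector fields gives
\begin{equation*}
\mathcal{L}_{V^L}\Phi \;=\; \mathcal{L}_{V^L}\pi_2^*\omega \;=\; \pi_2^*\mathcal{L}_V\omega \;=\; 0,
\end{equation*}
because by hypothesis $V$ preserves the Kähler form. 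This settles \eqref{e:lie:vl:phi}.

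For the closedness of $\mathcal{L}_{V^L}\eta$, I would simply invoke the fact that the Lie derivative commutes with the exterior derivative. Combined with the Sasakian identity $d\eta = \Phi$ and the result just obtained,
\begin{equation*}
d(\mathcal{L}_{V^L}\eta) \;=\; \mathcal{L}_{V^L}(d\eta) \;=\; \mathcal{L}_{V^L}\Phi \;=\; 0.
\end{equation*}
There is no genuine obstacle in this proof; the only point requiring care is the verification of $\pi_2$-relatedness, which is essentially a consequence of how the lift was defined. The argument is purely formal once $\Phi = \pi_2^*\omega$ is available.
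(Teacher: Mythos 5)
Your proof is correct. For the closedness of $\mathcal{L}_{V^L}\eta$ you argue exactly as the paper does, via $d\eta=\Phi$ and the commutation of $\mathcal{L}$ with $d$. For the vanishing of $\mathcal{L}_{V^L}\Phi$ your route is genuinely different: the paper merely asserts that one checks $(\mathcal{L}_{V^L}\Phi)(X^L,Y^L)=0$ and $(\mathcal{L}_{V^L}\Phi)(X^L,\xi)=0$ on the adapted frame of lifts together with $\xi$, without displaying that computation, whereas you replace the componentwise verification by the observation that $\pi_{2*}V^L=V$ (immediate from $V^L=-\pi_2^*\tau(V)\xi+V$ and $\pi_{2*}\xi=0$) together with the naturality identity $\mathcal{L}_{V^L}\pi_2^*\omega=\pi_2^*\mathcal{L}_V\omega$ for $\pi_2$-related vector fields, applied to $\Phi=\pi_2^*\omega$. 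The naturality identity is the standard one (it follows from Cartan's formula and $\iota_{V^L}\pi_2^*\omega=\pi_2^*\iota_V\omega$), so there is no gap. What your approach buys is a complete, coordinate- and frame-free argument that actually supplies the detail the paper leaves implicit; what the paper's frame check buys is uniformity with the computational style of the surrounding propositions (e.g.\ the analogous statements for $\mathcal{L}_{V^L}\phi$ and $\mathcal{L}_{V^L}g^L$, which are not pullbacks and do require the frame-by-frame evaluation).
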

\begin{proof}
To proof (\ref{e:lie:vl:phi}), we show that 
$(\mathcal{L}_{V^L}\Phi)(X^L,Y^L)=0$, and 
$(\mathcal{L}_{V^L}\Phi)(X^L,\xi)=0$. As Lie derivative and exterior 
derivative commute we have
\begin{equation}
0=\mathcal{L}_{V^L}\Phi = \mathcal{L}_{V^L}d\eta = 
d(\mathcal{L}_{V^L}\eta),
\end{equation}
hence $\mathcal{L}_{V^L}\eta$ is closed one-form.
\end{proof}

Now we provide result establishing kind of relationship between 
local symmetries of K\"ahler base and some vector fields on its
 Sasakian lift
\begin{theorem}
Let $V$ be inifinitesimal automorphism of K\"ahler structure of K\"ahler 
base. Its lift $V^L$, satisfies 
\begin{equation}
\mathcal{L}_{V^L}\phi = 2\alpha\otimes\xi , \quad 
\mathcal{L}_{V^L}g^L = 4\alpha^\phi\odot\eta, \quad
\mathcal{L}_{V^L}\Phi = 0,
\end{equation}
where $\alpha(\cdot)=g^L(V^L,\cdot)$, 
$\alpha^\phi(\cdot)=\alpha(\phi\cdot)=-g^L(\phi V^L,\cdot)$. and 
form $\alpha^\phi$ is closed, $d\alpha^\phi=0$.
\end{theorem}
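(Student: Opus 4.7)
The plan is to assemble the three identities by direct appeal to the three propositions about lifts of $J$-automorphisms, of Killing vector fields, and of K\"ahler-form automorphisms established immediately above, and then to deduce the closedness of $\alpha^\phi$ from Cartan's magic formula. A K\"ahler-structure automorphism $V$ satisfies $\mathcal{L}_V J = \mathcal{L}_V g = \mathcal{L}_V \omega = 0$, so each of those propositions applies to $V^L$. The first observation I would record is that $V^L$ lies in the characteristic distribution: from $V^L = V - \pi_2^*\tau(V)\,\xi$ on $\mathcal{M} = \mathbb{R}\times\mathcal{N}$ one computes $\eta(V^L) = dt(V^L) + \pi_2^*\tau(V^L) = -\pi_2^*\tau(V) + \pi_2^*\tau(V) = 0$, and consequently $\alpha(\xi) = g^L(V^L,\xi) = \eta(V^L) = 0$ and $\alpha^\phi(\xi) = \alpha(\phi\xi) = 0$.

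For the first identity, Proposition~\ref{p:inaut} gives $(\mathcal{L}_{V^L}\phi)X^L = 2g^L(V^L,X^L)\xi = 2\alpha(X^L)\xi$ and $(\mathcal{L}_{V^L}\phi)\xi = 0 = 2\alpha(\xi)\xi$; tensoriality together with the decomposition of any vector field as a $C^\infty$-linear combination of $\xi$ and lifts upgrades this to $\mathcal{L}_{V^L}\phi = 2\alpha\otimes\xi$. For the second identity, Proposition~\ref{p:kill} records $(\mathcal{L}_{V^L}g^L)(X^L,Y^L) = 0$ and $(\mathcal{L}_{V^L}g^L)(\xi,X^L) = 2\Phi(V^L,X^L)$, while $(\mathcal{L}_{V^L}g^L)(\xi,\xi) = 0$ follows from $[\xi,V^L] = 0$ and $g^L(\xi,\xi) = 1$. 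Evaluating $4\alpha^\phi\odot\eta$ on these same three pairs, using $\eta(X^L) = 0$, $\eta(\xi) = 1$, $\alpha^\phi(\xi) = 0$ and $\alpha^\phi(X^L) = g^L(V^L,\phi X^L) = \Phi(V^L,X^L)$, reproduces exactly those values; hence $\mathcal{L}_{V^L}g^L = 4\alpha^\phi\odot\eta$. The third identity $\mathcal{L}_{V^L}\Phi = 0$ is the content of the proposition stated just before this theorem.

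It remains to verify $d\alpha^\phi = 0$. Since $\alpha^\phi(X) = g^L(V^L,\phi X) = \Phi(V^L,X)$, we have $\alpha^\phi = \iota_{V^L}\Phi$, and Cartan's magic formula gives $d\alpha^\phi = \mathcal{L}_{V^L}\Phi - \iota_{V^L}d\Phi$. The first term vanishes by the identity just established, and the second vanishes because $\Phi = d\eta$ on a Sasakian manifold, so $d\Phi = 0$. The only step demanding any real care is the bookkeeping for the symmetric tensor product in the second identity, which I expect to be the main (still modest) obstacle; everything else is a direct invocation of results already in hand.
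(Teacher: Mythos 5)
Your proof is correct, and its overall shape matches the paper's: the three displayed identities are obtained exactly as you do, by citing Propositions \ref{p:inaut} and \ref{p:kill} and the proposition on automorphisms of the K\"ahler form (the paper's own proof in fact treats these as already established and writes nothing about them). The one point where you genuinely diverge is the closedness of $\alpha^\phi$, which is the only step the paper proves: the paper identifies $\alpha^\phi$ with $\pi_2^*(\iota_V\omega)$ and invokes closedness of $\iota_V\omega$ on the base (since $V$ is locally Hamiltonian for $\omega$), whereas you stay on the lift, write $\alpha^\phi=\iota_{V^L}\Phi$, and apply Cartan's formula together with $\mathcal{L}_{V^L}\Phi=0$ and $d\Phi=d(d\eta)=0$. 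Both are correct one-line arguments resting on the same mechanism (contraction of a closed invariant $2$-form with its symmetry field); your version has the small advantage of not requiring the separate identification of $\alpha^\phi$ with a pullback, while the paper's makes the relation to the Hamiltonian function on the base explicit, which is reused in the subsequent theorem on the automorphism $U_V=V^L+f\xi$. Your preliminary observations that $\eta(V^L)=0$, hence $\alpha(\xi)=\alpha^\phi(\xi)=0$, and your bookkeeping for $\alpha^\phi\odot\eta$ on the pairs $(X^L,Y^L)$, $(\xi,X^L)$, $(\xi,\xi)$ are exactly the checks needed to pass from the propositions to the tensorial identities, and they are carried out correctly.
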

\begin{proof}
Only what requires is a proof that form $\alpha^\phi$ is closed. 
Hover note that we may identify $\alpha^\phi$, with a pullback 
of closed form $X\mapsto \omega(V,X)$.
\end{proof}

Here we formulate main result of this section.
\begin{theorem}
Let $V$ be automorphism of K\"ahler structure of K\"ahler base. Then 
there is locally defined function $f$, $df(\xi)=0$, on Sasakian lift, such that
vector field $U_V=V^L+f\xi$, is local infinitesimal automorphism of 
almost contact metric structure of Sasakian lift.
\end{theorem}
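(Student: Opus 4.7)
The plan is to compensate for the Lie-derivative defects of $V^L$ (provided by the preceding theorem) by choosing $f$ so that $f\xi$ cancels them. The key structural input is that, by the previous theorem, every defect (in $\phi$, $\eta$, and $g^L$) is controlled by the single closed 1-form $\alpha^\phi$, so matching all of them reduces to one scalar equation of the form $df = -c\,\alpha^\phi$ for a fixed constant $c$. Since $\alpha^\phi$ is closed, such an $f$ exists locally by the Poincar\'e lemma.

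I would split $\mathcal{L}_{U_V}T = \mathcal{L}_{V^L}T + \mathcal{L}_{f\xi}T$ for each structure tensor $T\in\{\phi,\xi,\eta,g^L\}$. The $V^L$-contributions are taken directly from the previous theorem: $\mathcal{L}_{V^L}\phi = 2\alpha\otimes\xi$, $\mathcal{L}_{V^L}g^L = 4\alpha^\phi\odot\eta$, $\mathcal{L}_{V^L}\Phi = 0$, and $[V^L,\xi] = 0$; combining $\mathcal{L}_{V^L}\Phi = 0$ with Cartan's formula and $\eta(V^L) = 0$ yields $\mathcal{L}_{V^L}\eta = \iota_{V^L}d\eta = \iota_{V^L}\Phi = \alpha^\phi$. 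The $f\xi$-contributions follow from the Sasakian identities $\mathcal{L}_\xi g^L = 0$, $\mathcal{L}_\xi\phi = 0$, $\phi\xi = 0$, $\iota_\xi d\eta = 0$, together with $[f\xi,Y] = f[\xi,Y] - Y(f)\xi$: a direct calculation gives $[f\xi,\xi] = -\xi(f)\xi$, $\mathcal{L}_{f\xi}\eta = df$, $\mathcal{L}_{f\xi}g^L = df\otimes\eta + \eta\otimes df$, and $(\mathcal{L}_{f\xi}\phi)(X) = -df(\phi X)\,\xi$.

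Summing the two contributions, the equation $\mathcal{L}_{U_V}\xi = 0$ becomes $\xi(f) = 0$, which is precisely the stated normalization $df(\xi) = 0$. Using $\phi^2 = -\mathrm{id} + \eta\otimes\xi$ and $\alpha^\phi(\xi) = \alpha(\phi\xi) = 0$, the three remaining equations $\mathcal{L}_{U_V}\phi = 0$, $\mathcal{L}_{U_V}\eta = 0$, $\mathcal{L}_{U_V}g^L = 0$ each collapse to a single proportionality of the form $df = -c\,\alpha^\phi$ with one and the same constant $c$. Since $\alpha^\phi$ is closed by the previous theorem (in fact equal to the pullback from the K\"ahler base of the closed form $X\mapsto\omega(V,X)$), the Poincar\'e lemma produces on any contractible open set a primitive $h$ with $dh = c\,\alpha^\phi$; taking $f = -h$ solves the problem, and $df(\xi) = -c\,\alpha^\phi(\xi) = 0$ is automatic.

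The main point that requires care is the collapse of the three a priori independent equations into the single proportionality for $df$. This is not an accident but a direct reflection of the structural statement of the previous theorem: the obstruction of $V^L$ to being an infinitesimal almost contact metric automorphism on the lift is packaged into the one closed 1-form $\alpha^\phi$, whose different tensorial incarnations pair consistently against $\xi$, $\eta$ and $\phi$. The construction is necessarily local, since existence of the primitive $h$ relies on the Poincar\'e lemma.
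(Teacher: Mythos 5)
Your strategy is essentially the paper's: the correcting function is a local primitive of a multiple of $\alpha^\phi=\iota_{V^L}\Phi=\pi_2^*(\iota_V\omega)$, and the paper produces exactly this primitive by invoking the local Hamiltonian $H$ of $V$ (i.e. $\omega(V,\cdot)=dH$) and setting $f=-2\bar H=-2H\circ\pi_2$, so that $df=-2\alpha^\phi$; your Poincar\'e-lemma formulation is the same existence statement in different words, since the closedness of $\alpha^\phi$ is proved in the preceding theorem precisely by identifying it with the pullback of $\iota_V\omega$. The paper determines $f$ from the $\phi$-equation alone and then verifies the remaining identities directly, whereas you check all four tensor conditions and argue that they are mutually compatible; that compatibility check is the right thing to emphasize, and it is also where your write-up has a concrete slip.

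Namely, your step $\mathcal L_{V^L}\eta=\iota_{V^L}d\eta=\iota_{V^L}\Phi=\alpha^\phi$ is off by a factor of $2$ in the paper's conventions. The identity $[X^L,Y^L]=[X,Y]^L-2d\eta(X^L,Y^L)\xi$ together with $\eta(X^L)=0$ shows that the paper uses $d\eta(X,Y)=\tfrac12\bigl(X\eta(Y)-Y\eta(X)-\eta([X,Y])\bigr)$, for which Cartan's formula reads $\mathcal L_X\eta=2\,\iota_Xd\eta+d(\iota_X\eta)$; a direct computation (or Proposition \ref{p:kill} combined with $\eta=g^L(\xi,\cdot)$ and $[V^L,\xi]=0$) gives $\mathcal L_{V^L}\eta=2\alpha^\phi$. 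As written, your $\eta$-equation forces $df=-\alpha^\phi$ while your $g^L$-equation (from $4\alpha^\phi\odot\eta$, with $\odot$ the symmetrized product carrying the $\tfrac12$) forces $df=-2\alpha^\phi$, so the equations do \emph{not} collapse to one and the same constant $c$, and the argument taken literally would only close when $\alpha^\phi=0$. With the corrected factor all conditions consistently give $df=-2\alpha^\phi$, hence $f=-2\bar H$ up to an additive constant, $df(\xi)=0$ is automatic, and the rest of your proof goes through and coincides with the paper's.
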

\begin{proof}
From the properties of the Lie derivative
\begin{equation*}
(\mathcal{L}_{U_V}\phi)Y^L=(\mathcal{L}_{V^L}\phi)Y^L + 
f(\mathcal{L}_\xi\phi)Y^L-df(\phi Y^L)\xi,
\end{equation*}
on Sasakian manifold $\mathcal{L}_\xi\phi=0$, in the view of the Proposition {\bf\ref{p:inaut}}., we obtain
\begin{equation*}
(\mathcal{L}_{U_V}\phi)Y^L = (2g^L(V^L,Y^L)-df(\phi Y^L))\xi.
\end{equation*}
As $V$ is an automorphism of K\"ahler structure, is locally Hamiltonian, with 
respect to K\"ahler form $\omega(V,Y)=dH(Y)$, for locally defined function $H$.
Set $f=-2\bar{H}=-2H\circ\pi_2$, then  
\begin{equation*}
df(\phi Y^L) = df((JY)^L) = -2dH(JY)= -2\omega(V,JY)=2g(V,Y).
\end{equation*}
 Having 
$f$ determined  we verify 
directly that 
\begin{equation*}
(\mathcal{L}_{U_V}\phi)\xi = 0, \quad (\mathcal{L}_{U_V}g^L)(Y^L,Z^L)=0, 
\quad (\mathcal{L}_{U_V}g^L)(\xi,Y^L)=0.
\end{equation*}
\end{proof}

\section{Sasakian lift of K\"ahler-Ricci soliton and 
$\alpha$-Sasakian $\eta$-Ricci solitons }
In this section we are interested particularly in lifts of 
K\"ahler-Ricci solitons. 
Let vector field $X$ satisfies Ricci soliton equation  
on K\"ahler base
\begin{equation}
\label{e:ric:sol}
Ric+\dfrac{1}{2}\mathcal{L}_Xg = \lambda g,
\end{equation}
where $\lambda = const.$ is  a real constant. 
By direct computations we find 
$(\mathcal{L}_{X^L}g^L)(Y^L,Z^L) = (\mathcal{L}_Xg)(Y,Z)$, 
then by (\ref{e:ric:sol})
\begin{equation*}
\frac{1}{2}(\mathcal{L}_{X^L}g^L)(Y^L,Z^L) = 
\frac{1}{2}(\mathcal{L}_Xg)(Y,Z)= \lambda g(Y,Z)-Ric(Y,Z),
\end{equation*}
From other hand by the Proposition {\bf\ref{p:ric}.}, 
$$ 
\bar{R}ic(Y^L,Z^L)=Ric(Y,Z)-2g(Y,Z),
$$ 
summing up we find
\begin{align}
\label{e:ric:sol2}
& \bar{R}ic(Y^L,Z^L)+\frac{1}{2}(\mathcal{L}_{X^L}g^L)(Y^L,Z^L) = 
Ric(Y,Z)-2g(Y,Z) + \\
& \nonumber \lambda g(Y,Z)-Ric(Y,Z) = (\lambda-2)g^L(Y^L,Z^L),
\end{align}
in similar way
\begin{equation}
\label{e:ric:sol3}
\bar{R}ic(\xi,Y^L)+\frac{1}{2}(\mathcal{L}_{X^L}g^L)(\xi,Y^L)= \Phi(X^L,Y^L),
\end{equation}
\begin{equation}
\label{e:ric:sol4}
\bar{R}ic(\xi,\xi)+\frac{1}{2}(\mathcal{L}_{X^L}g^L)(\xi,\xi) = 2n.
\end{equation}
The identities (\ref{e:ric:sol2})-(\ref{e:ric:sol4}), allow us to state the following result
\begin{theorem}
Let  K\"ahler base be a K\"ahler-Ricci soliton.  Sasakian lift is twisted $\eta$-Ricci soliton
\begin{align}
& \bar{R}ic+\frac{1}{2}\mathcal{L}_{X^L}g^L= (\lambda-2)g^L-2(\mathcal L_{X^L}\eta)\odot\eta + (2n-2+\lambda)\eta\otimes \eta, 
\end{align}
\end{theorem}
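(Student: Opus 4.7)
The plan is to verify the claim pointwise using the orthogonal splitting $T\mathcal M = \mathbb R\xi \oplus \mathcal D$, the distribution $\mathcal D$ being spanned locally by lifts $X^L$ of vector fields from the K\"ahler base. Since both sides of the claimed equation are symmetric $(0,2)$-tensors, it is enough to verify the identity on three kinds of pairs, namely $(Y^L,Z^L)$, $(\xi,Y^L)$ and $(\xi,\xi)$, and then appeal to $\mathbb R$-bilinearity.

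For the left-hand side the work has already been done: the three evaluations are exactly identities (\ref{e:ric:sol2})--(\ref{e:ric:sol4}) derived in the paragraph preceding the theorem, resting on Proposition \ref{p:ric} together with the direct computation $(\mathcal L_{X^L}g^L)(Y^L,Z^L) = (\mathcal L_X g)(Y,Z)$. To handle the right-hand side I need just two auxiliary computations about $\mathcal L_{X^L}\eta$. First, $(\mathcal L_{X^L}\eta)(\xi)=0$, which follows from $X^L\eta(\xi)=X^L(1)=0$ combined with $[\xi,X^L]=0$ (as $X^L$ is $t$-independent, being built from $t$-independent data $\pi_2^*\tau$ and $X$). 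Second, $(\mathcal L_{X^L}\eta)(Y^L) = -\eta([X^L,Y^L]) = 2\Phi(X^L,Y^L)$, which uses the commutator formula (\ref{e:bra:lif}) together with $d\eta=\Phi$. In particular $\mathcal L_{X^L}\eta$ is a basic horizontal $1$-form with $\xi$ in its kernel.

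With these in hand, the verification reduces to matching scalar coefficients. On $(Y^L,Z^L)$ only the $g^L$ term on the right-hand side survives, and it matches $(\lambda-2)g(Y,Z)$ from (\ref{e:ric:sol2}). On $(\xi,Y^L)$ only the twisted term $(\mathcal L_{X^L}\eta)\odot\eta$ contributes, and matching its value against $\Phi(X^L,Y^L)$ from (\ref{e:ric:sol3}) pins down the coefficient in front of that term. On $(\xi,\xi)$ the $g^L$ and $\eta\otimes\eta$ contributions combine, and matching against $2n$ from (\ref{e:ric:sol4}) determines the coefficient of $\eta\otimes\eta$.

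There is no real obstacle here: the geometric content has already been spent on Propositions \ref{p:streqs}, \ref{p:curv}, \ref{p:ric} and on the three identities just above the theorem, so the result is essentially a repackaging of that data into a single tensorial equation. The only point requiring care is the bookkeeping of signs in the commutator formula (\ref{e:bra:lif}) and in the Lie derivative on one-forms, since these choices translate directly into the numerical coefficients $2C_1$ and $C_2$ sitting in front of the twisted term $(\mathcal L_{X^L}\eta)\odot\eta$ and the $\eta\otimes\eta$ term on the right-hand side.
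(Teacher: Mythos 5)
Your strategy is exactly the one the paper itself uses: the author derives the three evaluations \eqref{e:ric:sol2}--\eqref{e:ric:sol4} and then asserts the tensor identity with no further argument, so checking the claimed right-hand side on the pairs $(Y^L,Z^L)$, $(\xi,Y^L)$, $(\xi,\xi)$ is precisely the missing step. Your two auxiliary computations, $(\mathcal L_{X^L}\eta)(\xi)=0$ (from $[\xi,X^L]=0$) and $(\mathcal L_{X^L}\eta)(Y^L)=-\eta([X^L,Y^L])=2\Phi(X^L,Y^L)$ (from \eqref{e:bra:lif} and $d\eta=\Phi$), are correct and are exactly the ingredients the paper leaves implicit.

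The one step you defer --- ``matching pins down the coefficients'' --- is, however, where the proof of the statement \emph{as printed} fails, and you must carry it out explicitly. With the usual convention $(\alpha\odot\beta)(u,v)=\tfrac12(\alpha(u)\beta(v)+\alpha(v)\beta(u))$, the term $-2(\mathcal L_{X^L}\eta)\odot\eta$ evaluated on $(\xi,Y^L)$ gives $-(\mathcal L_{X^L}\eta)(Y^L)=-2\Phi(X^L,Y^L)$, whereas \eqref{e:ric:sol3} requires $+\Phi(X^L,Y^L)$; the coefficient forced by the matching is $+1$, not $-2$, and no renormalization of $\odot$ can fix this because the sign is wrong. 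Likewise on $(\xi,\xi)$ the printed right-hand side yields $(\lambda-2)+(2n-2+\lambda)=2n+2\lambda-4$, which agrees with the required value $2n$ from \eqref{e:ric:sol4} only when $\lambda=2$; the coefficient consistent with the three identities is $2n+2-\lambda$. Carried out honestly, your argument proves
\begin{equation*}
\bar{R}ic+\tfrac{1}{2}\mathcal{L}_{X^L}g^L=(\lambda-2)\,g^L+(\mathcal L_{X^L}\eta)\odot\eta+(2n+2-\lambda)\,\eta\otimes\eta,
\end{equation*}
not the displayed equation. So state the coefficients you actually obtain rather than asserting that they match; note also that the corrected twist coefficient is $2C_1=1$, i.e.\ $C_1=\tfrac12$, which lands on the paper's ``singular'' class and therefore affects the subsequent corollary about removing the twist by a $\mathcal D_{\alpha,\beta}$-homothety.
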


\begin{corollary}
Sasakian lift of K\"ahler-Ricci soliton admits $\mathcal D_{\alpha,\beta}$-homothety 
so $\alpha$-Sasakian image is $\eta$-Ricci soliton.
\end{corollary}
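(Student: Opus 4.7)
The plan is to chain together two things that have already been done in the paper: the theorem immediately above, which identifies the Sasakian lift of a Kähler-Ricci soliton as a twisted $\eta$-Ricci soliton with specific constants, and the transformation rules for the twist coefficient $C_1$ under a $\mathcal{D}_{\alpha,\beta}$-homothety established in the preceding subsection.

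First, I would read off the structural constants of the lift directly from the preceding theorem. Matching
\[
\bar{R}ic + \tfrac{1}{2}\mathcal{L}_{X^L}g^L = (\lambda-2)g^L - 2(\mathcal{L}_{X^L}\eta)\odot\eta + (2n-2+\lambda)\eta\otimes\eta
\]
against the general twisted $\eta$-Ricci soliton equation with constants $(\lambda_0, C_1, C_2)$ yields $\lambda_0 = \lambda-2$, $C_1 = -1$, and $C_2 = 2n-2+\lambda$. The decisive observation is $C_1 = -1 < \tfrac{1}{2}$, which places the lift in the first of the three classes discussed right after the invariance theorem, i.e.\ the class where the twist can be removed by a $\mathcal{D}_{\alpha,\beta}$-homothety.

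Next, I would apply the transformation formula
\[
C_1' = \frac{\alpha}{\beta^2}\Bigl(C_1 - \tfrac{1}{2}\Bigr) + \tfrac{1}{2}
\]
derived in the proof of the invariance theorem. Setting $C_1' = 0$ and substituting $C_1 = -1$ gives the simple algebraic condition $\alpha/\beta^2 = 1/3$. Any pair of positive reals $(\alpha,\beta)$ satisfying this relation (say $\beta=1$, $\alpha=1/3$) defines a legitimate $\mathcal{D}_{\alpha,\beta}$-homothety. By the corollary to the $\nabla'\phi$ proposition, the deformed structure is $(\beta/\alpha)$-Sasakian, and by construction the corresponding twisted $\eta$-Ricci soliton equation for the image reduces to
\[
\bar{R}ic' + \tfrac{1}{2}\mathcal{L}_{X'}g' = \lambda' g' + C_2' \eta'\otimes\eta',
\]
that is, a genuine $\eta$-Ricci soliton (with no symmetric-product twist term). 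The soliton vector field is the rescaled lift $X' = \tfrac{1}{\alpha}X^L$ as prescribed in the invariance theorem.

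There is no real obstacle here; the only point that needs a moment of care is notational, namely to avoid conflating the Kähler-soliton constant $\lambda$, the homothety parameter $\alpha$, and the scalar $\alpha$ in the phrase ``$\alpha$-Sasakian.'' Once the transformation rule for $C_1$ is in hand, the corollary is purely algebraic. If desired, one could also record the resulting constants $\lambda' = \tfrac{1}{\alpha}(\lambda-2\beta\,c)$ and $C_2'$ explicitly from the formulas in the invariance theorem, which then makes the sign of $\lambda'$ tractable in terms of the original Kähler-Ricci soliton's type (steady/shrinking/expanding), setting up the subsequent finer statements promised in the introduction.
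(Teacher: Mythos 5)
Your proof is correct and follows essentially the same route as the paper: the paper's own proof just cites the preceding theorem together with the removability of the twist when $C_1<\tfrac{1}{2}$, which is exactly the computation you make explicit (reading off $C_1=-1$ from the coefficient $-2$ of $(\mathcal{L}_{X^L}\eta)\odot\eta$ and solving $C_1'=0$ to get $\alpha/\beta^2=\tfrac{1}{3}$). One small caveat in your optional closing remark: when you substitute into $\lambda'=\tfrac{1}{\alpha}(\lambda-2c\beta)$, the source soliton constant of the lift is $\lambda-2$ (the K\"ahler constant shifted by the lift), not the K\"ahler constant $\lambda$ itself.
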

\begin{proof}
Sasakian lift satisfies equation of twisted $\eta$-Ricci soliton. By 
{\bf Corollary \ref{c:tesol:esol} .} there is a $\mathcal D_{\alpha,\beta}$-homothety so image is $\alpha$-Sasakina  $\eta$-Ricci soliton.
\end{proof}

As Ricci tensor determines a 2-form on Sasakian lift above equation can be expressed in the following form
\begin{proposition}
 Sasakian lift of K\"ahler-Ricci soliton, satisfies 
following equations with resp. to the fundamental form and Ricci form of Sasakian 
lift
\begin{equation}
\bar{\rho}+\frac{1}{2}\mathcal{L}_{X^L}\Phi = (\lambda-2)\Phi.
\end{equation}
\end{proposition}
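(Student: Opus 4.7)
The plan is to derive the identity as the $\pi_2$-pullback of the K\"ahler form version of the K\"ahler--Ricci soliton equation on the base, then translate everything to objects on the lift using the two already-established identities $\Phi = \pi_2^*\omega$ and $\bar\rho = \pi_2^*\rho - 2\pi_2^*\omega$. This route is more transparent than trying to contract the twisted $\eta$-Ricci soliton equation for $\bar{R}ic$ proved just above with $\phi$ in the second slot, which would still require unpacking how $\mathcal{L}_{X^L}$ interacts with $\phi$.

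First, I would recall the reformulation recorded in the Ricci solitons subsection: on a K\"ahler manifold the Ricci soliton equation together with $\mathcal{L}_X J = 0$ is equivalent to the two-form identity $\rho + \tfrac{1}{2}\mathcal{L}_X\omega = \lambda\,\omega$ on $\mathcal{N}$. Next I would observe that $X^L$ is $\pi_2$-related to $X$: from $X^L = -\pi_2^*\tau(X)\xi + X$ and $\pi_{2*}\xi = 0$, one reads off $\pi_{2*}X^L = X$ immediately. Since Lie derivative commutes with pullback for $\pi$-related vector fields, this yields the key naturality identity $\mathcal{L}_{X^L}\Phi = \mathcal{L}_{X^L}\pi_2^*\omega = \pi_2^*\mathcal{L}_X\omega$.

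Pulling the base soliton identity back by $\pi_2$ then produces $\pi_2^*\rho + \tfrac{1}{2}\mathcal{L}_{X^L}\Phi = \lambda\,\Phi$. Substituting $\pi_2^*\rho = \bar\rho + 2\Phi$ from the preceding proposition on the Sasakian Ricci form and rearranging gives $\bar\rho + \tfrac{1}{2}\mathcal{L}_{X^L}\Phi = (\lambda-2)\Phi$, the claimed identity. The only non-routine step in this plan is the naturality assertion in the middle paragraph, which rests entirely on the $\pi_2$-relation $\pi_{2*}X^L = X$; everything else is a substitution of identities already proved in the paper.
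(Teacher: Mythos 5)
Your argument is correct, and it is worth noting that the paper itself states this proposition without any proof, so your write-up actually supplies the missing argument. Every ingredient you invoke is already established in the text: the two-form version $\rho+\tfrac12\mathcal L_X\omega=\lambda\omega$ of the K\"ahler--Ricci soliton equation (together with $\mathcal L_XJ=0$) is recorded in the Ricci solitons subsection, the identity $\Phi=\pi_2^*\omega$ is proved when the lift is shown to be contact metric, and $\bar\rho=\pi_2^*\rho-2\pi_2^*\omega$ is the preceding proposition. The one step you flag as non-routine, $\mathcal L_{X^L}\pi_2^*\omega=\pi_2^*\mathcal L_X\omega$, is indeed the crux, and it holds because $\pi_{2*}X^L=X$ and Lie differentiation along $\pi$-related fields commutes with pullback (e.g.\ via Cartan's formula, since $\iota_{X^L}\pi_2^*\omega=\pi_2^*\iota_X\omega$ and $d$ commutes with $\pi_2^*$). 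The route the paper appears to intend --- contracting the twisted $\eta$-Ricci soliton identity for $\bar Ric$ with $\phi$ in the second slot --- would additionally require observing that the $\eta$-terms vanish on $(\,\cdot\,,\phi\,\cdot\,)$ and that the discrepancy between $(\mathcal L_{X^L}g^L)(\,\cdot\,,\phi\,\cdot\,)$ and $(\mathcal L_{X^L}\Phi)$ involves $\mathcal L_{X^L}\phi$, which by Proposition \ref{p:inaut} is proportional to $\xi$ and hence drops out on lifts; your pullback route bypasses all of this bookkeeping and is the cleaner of the two.
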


Above corollary give us plenty of examples of $\eta$-Ricci solitons.

\end{document}